%%%%%%%%%%%%%%%%%%%%%%%%%%%%%%%%
% LaTeX file
%%%%%%%%%%%%%%%%%%%%%%%%%%%%%%%%

\documentclass[11pt, draft]{amsart}
\usepackage{amssymb, amstext, amscd, amsmath, amssymb}
\usepackage{mathtools, color, paralist, dsfont, rotating}
\usepackage{verbatim}
\usepackage{enumerate}
\usepackage{tikz-cd}
\usepackage{tikz}
\usepackage[all]{xy}
\numberwithin{equation}{section}

%%%%%%%%% begin quotation %%%%%%%%%
% if quoting package is not applicable,
% mute the following orders
% and substitute quoting <----> quote
%\usepackage{quoting}
%\quotingsetup{vskip=.1in}
%\quotingsetup{leftmargin=.17in}
%\quotingsetup{rightmargin=.17in}
%%%%%%%%%%%%%%%%%%%%%%%%%%%%%%%%%%%
%%%%%%%%%% end quotation %%%%%%%%%%

%%%%%%%%%% begin macros %%%%%%%%%%%%%%%
%
%      Cites in bold rather than roman.
\makeatletter
\def\@cite#1#2{{\m@th\upshape\bfseries%
[{#1\if@tempswa{\m@th\upshape\mdseries, #2}\fi}]}}
\makeatother
%
% Proclamation definitions in the most emphatic (plain) style:
\theoremstyle{plain}
\newtheorem{theorem}{Theorem}[section]
\newtheorem{corollary}[theorem]{Corollary}
\newtheorem{proposition}[theorem]{Proposition}
\newtheorem{lemma}[theorem]{Lemma}
% Proclamation definitions in the less emphatic (definition) style:
\theoremstyle{definition}
\newtheorem{definition}[theorem]{Definition}

\newtheorem{remark}[theorem]{Remark}

% Proclamation definitions in the least emphatic (remark) style:
\theoremstyle{remark}
%\newtheorem{remark}{Remark}

%      Proof environment

% Change first-level `enumerate' numbering from arabic to roman.
%

% properly align :=
\mathtoolsset{centercolon}
%
%      Capital script letters
  \newcommand{\cA}{{\mathcal{A}}}
  \newcommand{\cB}{{\mathcal{B}}}
  \newcommand{\cC}{{\mathcal{C}}}

  \newcommand{\cF}{{\mathcal{F}}}
  
	\newcommand{\cH}{{\mathcal{H}}}

  \newcommand{\cK}{{\mathcal{K}}}
	\newcommand{\cL}{{\mathcal{L}}}
  
  \newcommand{\cN}{{\mathcal{N}}}
	\newcommand{\cO}{{\mathcal{O}}}

  \newcommand{\cT}{{\mathcal{T}}}

% Capital script letters
\newcommand{\A}{{\mathcal{A}}}
\newcommand{\B}{{\mathcal{B}}}
\newcommand{\C}{{\mathcal{C}}}

\newcommand{\E}{{\mathcal{E}}}
\newcommand{\F}{{\mathcal{F}}}
\newcommand{\G}{{\mathcal{G}}}
\renewcommand{\H}{{\mathcal{H}}}

\newcommand{\K}{{\mathcal{K}}}
\renewcommand{\L}{{\mathcal{L}}}

\newcommand{\N}{{\mathcal{N}}}
\renewcommand{\O}{{\mathcal{O}}}

\renewcommand{\S}{{\mathcal{S}}}
\newcommand{\T}{{\mathcal{T}}}
\newcommand{\U}{{\mathcal{U}}}
\newcommand{\V}{{\mathcal{V}}}

%Roman letters for math

%Math capital letters

\newcommand{\bZ}{\mathbb{Z}}

% Blackboard bold letters

\newcommand{\bbC}{{\mathbb{C}}}

\newcommand{\bbI}{{\mathbb{I}}}

\newcommand{\bbN}{{\mathbb{N}}}

\newcommand{\bbT}{{\mathbb{T}}}
\newcommand{\bbZ}{{\mathbb{Z}}}

%Gothic letters

%      Lower case bold letters

%      Text used in equations

%      Useful shortforms
\newcommand{\lip}{\langle}
\newcommand{\rip}{\rangle}

\newcommand{\cenv}{\mathrm{C}_e^*}

%      Operators

\newcommand{\Aut}{\operatorname{Aut}}
\newcommand{\alg}{\operatorname{alg}}

\newcommand{\spn}{\operatorname{span}}

\newcommand{\supp}{\operatorname{supp}}

\newcommand\cpr{\rtimes_{\alpha}^{r}\,{\mathcal{G}}}
\newcommand\cpf{\rtimes_{\alpha}\, {\mathcal{G}}}

\newcommand{\ca}{\mathrm{C}^*}

% Macros for graphs

% Useful shortforms
\newcommand{\hf}{\widehat{f}}
\newcommand{\hg}{\widehat{g}}

 % \sca{a,b} =<a,b>
            % \lsca{a,b} =(a,b)
    %\nor{x}=||x||

%%%%%%%%%%%%%%%%%%%%%%%%%%%%%%%%
\begin{document}
%%%%%%%%%%%%%%%%%%%%%%%%%%%%%%%%

%%%%%%%%%%%%%%%%%%%%%%%%%%%%%%%%
\title{Product systems of $\ca$-correspondences and Takai duality}

%\author[A. Dor-On]{Adam Dor-On}
%\address{Department of Mathematics\\ Teachnion-Israel Institute of Technology\\ Haifa\\ Israel}
%\email{dor-on@technion.ac.il}

\author[E. Katsoulis]{Elias Katsoulis}
\address{Department of Mathematics \\ East Carolina University \\ Greenville \\ NC \\ 27858-4353 \\ USA}
\email{katsoulise@ecu.edu}

\subjclass[2010]{Primary: 46L08, 46L55, 47B49, 47L40, 47L65, 46L05}
\keywords{ C*-envelope, product systems}

%\thanks{The first author was partially supported by an Ontario Trillium Scholarship}

\maketitle

%%%%%%%%%%%%%%%%%%%%%%%%%%%%%%%%
\begin{abstract}
We establish the Hao-Ng isomorphism for generalized gauge actions of locally compact abelian groups on product systems over abel-\break ian lattice orders and we then use it to explore Takai duality in this context. As an application we generalize some recent work of Schafhauser.
\end{abstract}

%%%%%%%%%%%%%%%%%%%%%%%%%%%%%%%%
\section{Introduction}
%%%%%%%%%%%%%%%%%%%%%%%%%%%%%%%%
If $(\A, G, \alpha)$ is a $\ca$-dynamical system over a locally compact abelian group $G$, then the crossed product $\ca$-algebra $\A\rtimes_{\alpha} G$ admits a natural action $\widehat{\alpha}$ of the dual group $\widehat{G}$. This produces a new $\ca$-dynamical system $(\A\rtimes_{\alpha} G, \widehat{G}, \widehat{\alpha})$ and the classical Takesaki-Takai duality asserts that
\[
(\A \rtimes_{\alpha} G)\rtimes_{\widehat{\alpha}}\widehat{G} \simeq \A\otimes \K(L^2(G, \mu)),
\]
where $\K(L^2(G, \mu))$ denotes the compact operators on $L^2(G , \mu)$, $\mu$ the Haar measure on $G$.

The Cuntz-Pimsner algebras of product systems of $\ca$-correspondences over abelian lattice orders $(G,P)$, include as particular examples crossed products of $\ca$-algebras by various discrete abelian  groups. Just as in the case of a crossed product, the generic Cuntz-Pimsner algebra $\N\O_X^r$ admits a natural action of the dual group $\widehat{G}$, the so called gauge action. One would like to calculate the associated crossed product  $\N\O_X^r \rtimes \widehat{G}$; in particular one wonders what is the analogue of the classic Takesaki-Takai duality in this case. This is the main theme of this paper with inspiration coming from the work of Abadie~\cite{Ab} who first studied this problem for Cuntz-Pimsner algebras of $\ca$-correspondences.

A key step in the calculation of $\N\O_X^r \rtimes \widehat{G}$ involves the solution of the Hao-Ng isomorphism problem for generalized gauge actions of locally compact abelian groups on product systems over abelian lattice orders. (See Section~\ref{s;HaoNg} for a statement of the Hao-Ng isomorphism problem and \cite{KR} for a detailed discussion of its impact on current operator algebra research.) Recent advances in the theory of non-selfadjoint operator algebras by Dor-On and the author \cite{DorK} allow us to do this in Theorem~\ref{HaoNgreduced}. A key element in the proof of Theorem~\ref{HaoNgreduced} is Proposition~\ref{p;criterion} which gives a very workable criterion for checking the compact alignment of a product system.

Using the Hao-Ng isomorphism and the Fourier transform, we calculate the crossed product $\N\O_X^r \rtimes \widehat{G}$ by the gauge action. Indeed in Theorem~\ref{thm;calc} we give a very concrete picture for $\N\O^r_{X}\rtimes \widehat{G}$ as the Cuntz-Pimsner algebra of a product system that involves only the group $G$  and not its dual. For product systems of regular and full $\ca$-correspondences over abelian lattice orders we can do more. In Corollary~\ref{cor;TakaiDuality} we show that $\N\O_X^r \rtimes \widehat{G}$ is Morita equivalent to the core of $\N\O_X^r$, thus offering a true generalization of the Takai duality in our context. As a consequence of Corollary~\ref{cor;TakaiDuality}, we are able to generalize some recent results of Schafhauser \cite{Sch} in our context; this is discussed in Section~\ref{s;Sch}.

\section{preliminaries}

The purpose of this section is to establish notation and provide the basic definitions and results  that are necessary for the rest of the paper. 

The study of $\ca$-correspondences and associated $\ca$-algebras began with the work of Pimsner 
\cite{Pim97} and was systematized by Katsura \cite{Kats02, Kat04a, Kat07}, following work of Muhly and Solel~\cite{MS98b} and others. Inspired by work of Nica ~\cite{Nic92}, a more general class of $\ca$-algebras, those associated with product systems, has also been of interest and is under investigation by many specialists \cite{CLSV11, Fow99, Fow02, FR98}. All these abstract operator algebras were inspired by concrete manifestations, which are at the forefront of the theory and continue to offer insight for the general theory \cite{KumP, KPR, KumP2, LRaeb, RS03, RSY03, RSY04, Yee06}.
The literature on product systems is vast and providing a comprehensive summary of the theory in this short paper was proven to be an impossible task. The author apologizes for the inevitable omission of some central results in the theory from the exposition below. 

In what follows, if $M$ is a subset of a normed vector space $\V$, then $[M]$ will denote the closed linear subspace of $\V$ generated by $M$. If $\V$ also happens to be an algebra and $M, N\subseteq  \V$, then $MN$ will denote the closed subspace of $\V$ generated by products of the form $mn$, $m \in  M, n \in N$. An ideal of a $\ca$-algebra always means a closed two-sided ideal.

%%%%%%%%%%%%%%%%%%%%%%%%%%%%%%%%
\subsection{C*-correspondences}
%%%%%%%%%%%%%%%%%%%%%%%%%%%%%%%%
 Here we will give an overview of Hilbert C*-correspondences. For further details and material, we recommend \cite{Lan95}.
 
Let $\cA$ be a C*-algebra. A right inner product $\cA$-module is a complex vector space $X$ equipped with a right action of $\cA$ and an $\cA$-valued map $\lip \cdot , \cdot \rip : X\times X \rightarrow \cA$  which is $\cA$-linear in the second argument, such that for $x,y \in X$ we have 
\begin{enumerate}
\item
$\lip x,x\rip \geq 0$
\item
$\lip x,x \rip = 0$ if and only if $x=0$
\item
$\lip x,y \rip = \lip y,x \rip^*$
\end{enumerate}
When $X$ is complete with respect to the norm given by $\| x \| = \| \lip x,x \rip \|^{\frac{1}{2}}$ we say that $X$ is a Hilbert $\cA$-module. We say that $X$ is full provided that $[\langle X,X\rangle] =\A$, i.e., the closed linear space generated by elements of the form $\langle x, y\rangle$, $x, y \in X$, equals $\A$.

Let $X$ be a Hilbert $\cA$ module. We say that a map $T : X \rightarrow X$ is adjointable if there's a map $T^* :X \rightarrow X$ such that $\lip Tx,y \rip = \lip x,  T^*y \rip$ for every $x,y \in X$. Every adjointable operator is automatically $\cA$-linear and continuous. We denote by $\cL(X)$ the C*-algebra of adjointable operators equipped with the operator norm. For $x,y\in X$ there is a special adjointable operator $\theta_{x,y} \in \cL(X)$ given by $\theta_{x,y}(z) = x  \lip y,z \rip$. We will denote by $\cK(X) \lhd \cL(X)$ the closed ideal of generalized compact operators generated by $\theta_{x,y}$ with $x,y \in X$.

A $\cB$-$\cA$ $\ca$-correspondence is then just a (right) Hilbert $\cA$-module $X$ along with a $*$-homomorphism $\phi : \cB \rightarrow \cL(X)$ which is non-degenerate, i.e., $[\phi(B)X]= X$ (this is sometimes called essential). If $X$ is an $\cA$-$\cA$ $\ca$-correspondence we will just call $X$ an $\cA$-correspondence. We think of $\phi$ as implementing a left action of $\cB$ on $X$, and we will be writing $bx$ for $\phi(b)x$. 

A $\cB$-$\cA$ $\ca$-correspondence is called a Hilbert $\cB$-$\cA$ bimodule if there exists a left $\cB$-valued inner product $[ \cdot , \cdot ] : X\times X \rightarrow \cB$  which is $\cB$-linear in the first argument, such that for $x,y , z \in X$ we have 
\begin{enumerate}
\item
$[ x,x] \geq 0$
\item
$[ x,y ] = \lip y,x \rip^*$
\item
$[x,y]z=x\langle y, z\rangle.$
\end{enumerate}
If $X$ is both left and right full then it is said to be an equivalence bimodule. In that case the action of $\B$ is automatically injective.

When $X$ is a $\cC$-$\cB$-correspondences and $Y$ a $\cB$-$\cA$-correspondence, we may form the interior tensor product $X \otimes_{\cB} Y$. Indeed, let $X\odot_{\cB} Y$ be the algebraic $\cB$-balanced tensor product. Then the formula
$$
\lip x \odot y, w \odot z \rip := \lip y, \lip x,w \rip \cdot z \rip,
$$
determines an $\cA$-valued sesquilinear form on $X\odot_{\cB} Y$, whose Hausdorff completion $X \otimes_{\cB} Y$ is a (right) Hilbert $\cA$-module. There is then a left $\cC$ action $\cC \rightarrow \cL(X \otimes_{\cB} Y)$ given by $c \cdot (x\odot y) = (c \cdot x) \odot y$ for $x \in X$, $y\in Y$ and $c\in \cC$.

%%%%%%%%%%%%%%%%%%%%%%%%%%%%%%%%
\subsection{Product systems over semigroups}
%%%%%%%%%%%%%%%%%%%%%%%%%%%%%%%%

Let $\cA$ be a C*-algebra and $P$ a semigroup with identity $e$. A product system over $P$ with coefficients in $\cA$ is a semigroup of $\cA$-correspondences $X = (X_p)_{p\in P}$ such that
\begin{enumerate}
\item
$X_e = \cA$ is the trivial $\A$-corresponndence.
\item
For $p,q \in P$, there exists a unitary $\A$-linear isomorphism $U_{p,q} : X_p \otimes X_q \rightarrow X_{pq}$
\item
The left and right multiplication on each $X_p$ are given via $U_{e,p}$ and $U_{p,e}$ for each $p\in P$ and we also have associativity in the sense that for $p,q,r \in P$,
$$
U_{p,qr}(I_{X_p} \otimes U_{q,r}) = U_{pq,r}(U_{p,q}\otimes I_{X_r})
$$ 
\end{enumerate}

We will denote $U_{p,q}(x\otimes y) = xy \in X_{pq}$ for every $x\in X_p$ and $y\in X_q$. We will also denote by $\phi_p : X_e \rightarrow \cL(X_p)$ the left action on $X_p$ for each $p\in P$. In particular, $\phi_{pq}(a)(xy) = (\phi_p(a)x)y$ for all $p,q\in P$, $a\in A$ and $x\in X_p$, $y\in X_q$. 

Given $p \in P \setminus \{e\}$ and $q \in P$, the unitary $X_e$-linear map $U_{p,q} : X_p \otimes X_q \rightarrow X_{pq}$ induces a $*$-homomorphism $\iota_p^{pq} : \cL(X_p) \rightarrow \cL(X_{pq})$ via 
$$
\iota_{p}^{pq}:= U_{p,q} \circ (S \otimes id_{X_p})U_{p,q}^{-1}
$$
for each $S\in \cL(X_p)$. Alternatively, we have that the $*$-homomorphism $\iota_p^{pq}$ is given by the formula $\iota_p^{pq}(S)(xy) = (Sx)y$ for each $S\in \cL(X_p)$, $x\in X_p$ and $y\in X_q$. (For notational ease we will be simply writing $S\otimes I$ instead of $\iota_p^{pq}(S)$ whenever $p$ and $q$ are easily understood from the context.) For $\iota_e^q$, we first define on $X_e \cong \cK(X_e)$ via $\iota_e^p(a) = \phi_p(a)$, and then extend uniquely to $\cL(X_e)$ via \cite[Proposition 2.5]{Lan95} to obtain a map $\iota_e^q : \cL(X_e) \rightarrow \cL(X_q)$. 

When $X=(X_p)_{p\in P}$ is a product system over a quasi-lattice ordered semigroup $(G,P)$, we will say that $X$ is \emph{compactly aligned} if whenever $S\in \cK(X_p)$ and $T\in \cK(X_q)$ for some $p,q\in P$ with $p \vee q < \infty$, then $$(S\otimes I)(T\otimes I)= \iota_p^{p \vee q}(S) \iota_q^{p \vee q}(T) \in \cK(X_{p\vee q}).$$

%%%%%%%%%%%%%%%%%%%%%%%%%%%%%%%%
\subsection{Nica-Toeplitz representations}  \label{ss;NT}
%%%%%%%%%%%%%%%%%%%%%%%%%%%%%%%%
We next define representations of compactly aligned product systems over quasi-lattice ordered groups.

\begin{definition} \label{D:isom}
Suppose $(G,P)$ is a quasi-lattice ordered group, and $X= \{X_p\}_{p \in P}$ a compactly aligned product system over $P$. An \emph{isometric representation} of $X$ into a $\ca$-algebra $\B$ is a map $\psi: X \rightarrow \B$ comprised of linear maps $\psi_p : X_p \rightarrow \B$ for each $p\in P$ such that
\begin{enumerate}
\item
$\psi_e$ is a $*$-homomorphism from $X_e$ into $\B$.
\item
$\psi_p(x)\psi_q(y) = \psi_{pq}(xy)$ for all $p,q\in P$ and $x \in X_p$, $y\in X_q$.
\item
$\psi_p(x)^*\psi_p(y) = \psi_e(\lip x,y \rip)$ for all $p\in P$ and $x,y\in X_p$.
\end{enumerate}
\end{definition}

It is standard to show that each $\psi_p$ is contractive, and is isometric precisely when $\psi_e$ is injective. We will say that $\psi$ is non-degenerate provided that $\B \subseteq B(\H)$ and $\psi_e$ is non-degenerate. For each $p \in P$ we have a representation of $\K(X_p)$ that extends the association $\theta_{x, y}\mapsto \psi_p(x)\psi_p(y)^*$, $x , y \in X_p$; by abusing notation we will also denote that representation as $\psi_p$. By by \cite[Proposition 2.5]{Lan95} the representation $\psi_p$ admits a strict-sot continuous extension $\rho_p\colon \L(X_p)\rightarrow B(\H)$.

If $X$ is the trivial product system over $(G,P)$, i.e., $X_p=\bbC$, for all $p \in P$, then an isometric representation of $X$ is simply a representation of $P$ as a semigroup of isometries.

We will say that an isometric representation $\psi: X \rightarrow \B$ of a compactly aligned product system $X$ is \emph{Nica-covariant} if for any $p,q\in P$ and $S\in \cK(X_p)$, $T\in \cK(X_q)$ we have that
\begin{equation} \label{D:NC1}
\psi_p(S)\psi_q(T) = 
\begin{cases}
\psi_{p\vee q}(S\otimes I)((T\otimes I)) & \text{if } p \vee q < \infty  \\
0 & \text{otherwise.}
\end{cases}
\end{equation}

In the case where $\B \subseteq B(\H)$ and $\psi_e$ is a non-degenerate representation the above definition simplifies. Indeed in that case $\psi: X \rightarrow B(\H)$ is Nica-covariant if and only if for any $p,q\in P$ we have 
\begin{equation} \label{D:NC2}
\rho_p(I)\rho_q(I) = 
\begin{cases}
\rho_{p\vee q}(I) & \text{if } p \vee q < \infty  \\
0 & \text{otherwise.}
\end{cases}
\end{equation}

This condition has the drawback that it applies only to representations into adjointable operators on Hilbert modules, but has the advantage that it works for arbitrary product systems which are not necessarily compactly aligned. With the advent of compactly aligned systems, condition (\ref{D:NC2}) was replaced by (\ref{D:NC1}). Both conditions are equivalent for concrete representations by \cite[Proposition 5.6]{Fow02}. Finally recall that in the case where $(G,P)$ is an abelian, lattice ordered group, we always have $p\vee q<\infty$ for any $p,q \in P$, so that the formulas simplify in both (\ref{D:NC1}) and (\ref{D:NC2}).

Each product system $X$ has a natural Nica-covariant isometric representation on Fock space which we now describe. We denote by $\cF_X := \oplus_{p \in P}X_p$ the direct sum of sequences. We then define $l : X \rightarrow \cL(\cF_X)$ given by $l_p(x)(y_q)_{q\in P} = (xy_q)_{q\in P}$ for each $p\in P$, $x\in X_p$, and $(y_q)_{q\in P} \in \cF_X$. We call $l$ the Fock representation, which is an isometric Nica-covariant representation of $X$ by \cite[Lemma 5.3]{Fow02}.

We denote by $\cN \cT_X$ the universal C*-algebra generated by a Nica-co\-variant representation for $X$, which exists due to \cite[Theorem 6.3]{Fow02}. Hence, there is an isometric Nica-covariant representation $i_X : X \rightarrow \cN \cT_X$ such that $\cN \cT_X$ is generated by the image of $i_{X}$ and for any other isometric Nica-covariant representation $\psi : X \rightarrow B(\cH)$ there exists a $*$-homomorphism $\psi_* : \cN \cT_X \rightarrow B(\cH)$ such that $\psi_* \circ i_{X,p} = \psi_p$, for every $p\in P$.

In \cite{CLSV11} the authors introduce a pair $(\N\O_X^r, j)$ which is co-universal for isometric, Nica-covariant, gauge compatible representations of X in the following sense: $\N\O_X^r$ is a $\ca$-algebra and $j : X \rightarrow \N\O_X^r$ is a Nica-covariant representation satisfying the following properties
\begin{enumerate}
\item 
$j_e$ is faithful,
\item
$j_*$ is gauge compatible surjection, where $j_* : \cN \cT_X \rightarrow \N\O_X^r$ is the canonical $*$-homomorphism induced by $j$, and
\item for any gauge-compatible Nica-covariant isometric representation $\psi : X \rightarrow \B$ for which $\psi_e$ is faithful, there is a surjective $*$-homomor- phism $q : \ca (\{\psi_p(X_p)\}_{p \in P}) \rightarrow \N\O_X^r$ such that $$q \circ \psi_p(\xi) = j_p(\xi), \mbox{ for all } \xi \in X_p \mbox{ and } p \in P.$$
\end{enumerate}

The existence of the pair $(\N\O_X^r, j)$ is not guaranteed a priori by its defining properties; nevertheless  if it exists then it is unique \cite[Theorem 4.1]{CLSV11}. Most of \cite{CLSV11} was devoted to showing that in many cases, such a pair does exist. However various other cases were left open in \cite{CLSV11}; in particular, the case for product systems over abelian, lattice ordered groups was not completely resolved. This was finally resolved in \cite{DorK}; see Theorem~\ref{T:CNP-envelope} below.

\subsection{Operator algebras and $\ca$-envelopes}

Let $\cA$ be an operator algebra. We say that the pair $(\B, \iota)$ is a {\em C*-cover} for $\cA$, if $\iota :\cA \rightarrow \cB$ is a completely isometric homomorphism, and $\ca(\iota(\cA)) = \cB$.

There is always a unique, smallest C*-cover for an operator algebra $\cA$. This C*-cover $(C_{e}^*(\cA), \kappa)$ is called the {\em C*-envelope} of $\cA$ and it satisfies the following universal property: given any other C*-cover $(\cB,\iota)$ for $\cA$, there exists a (necessarily unique and surjective) $*$-homomorphism $\pi:\cB \rightarrow C_{e}^*(\cA)$, such that $\pi \circ \iota = \kappa$. We will sometimes identify $\cA$ with its image $\iota(\cA)$ under a given C*-cover $(\B, \iota)$ for $\cA$. We say that $\rho : \A \rightarrow B(\H)$ is a \emph{representation} of $\A$ if $\rho$ is a completely contractive homomorphism. We refer the reader to \cite{Kat1} for a gentle introduction or \cite{BLM04, Paulsen} for a more in-depth treatment of the theory.

Let us consider identifying the $\ca$-envelope of a concrete operator algebra that plays an important role in the theory of product systems. Let $X$ be a product system $X$ over a (discrete) abelian, lattice ordered group $(G,P)$. The \textit{Nica tensor algebra} of the product system $X$ is given by
$$
\cN \cT^+_X := \overline{\alg}^{\| \cdot \|} \{ (i_{X})_p(X_p) \mid p \in P\}.
$$
The tensor algebra $\N\T^+_X$ is completely isometrically isomorphic to the non-selfad\-joint operator algebra generated by the image of the Fock representation $l \colon X \rightarrow \L(\F_X)$. It is naturally a subalgebra of $\cN \cT_X$, and is also the universal norm-closed operator algebra generated by a Nica-covariant isometric representation of $X$. 

The following is the central result of \cite{DorK} and among others it guarantees the existence of $\N\O_X^r$ for product systems over abelian lattice orders. It also extends an earlier result of the author and Kribs~\cite{KK06b}.

\begin{theorem} \label{T:CNP-envelope}
Let $X=\{X_p\}_{p \in P}$ be a compactly aligned product system over an abelian lattice order $(G, P)$. Then the $\ca$-algebras
\begin{itemize}
\item[\textup{(i)}] $\ca_e(\N\T_X^+)$, the $\ca$-envelope of the Nica tensor algebra $\N\T^+_X$,
\item[\textup{(ii)}] $ \cN \cO_X^r$, the co-universal $\ca$-algebra for gauge-compatible, Nica covariant representations of X, of Carlsen, Larsen, Sims and Vittadello \cite{CLSV11}, and
\item[\textup{(iii)}] Sehnem's $\ca$-algebra $X_e \times_XP$ \cite{Seh+},
\end{itemize}are mutually isomorphic via maps that send generators to generators.
\end{theorem}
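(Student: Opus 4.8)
The statement packages three descriptions of a single $\ca$-algebra, so the plan is to identify each of (ii) and (iii) with the $\ca$-envelope (i), with every identification carried by the canonical maps on generators. The unifying observation is that all three are quotients of $\N\T_X$ by a gauge-invariant ideal meeting the coefficient copy $i_X(X_e)$ trivially, and that among such ideals there is a largest one; the content is that $\ca_e(\N\T_X^+)$ realizes exactly this quotient. First I would verify that $\ca_e(\N\T_X^+)$ is a legitimate candidate for the co-universal algebra. The gauge action of $\widehat{G}$ on $\N\T_X$ restricts to a group of completely isometric automorphisms of the non-selfadjoint algebra $\N\T_X^+$; by functoriality of the $\ca$-envelope under completely isometric isomorphisms this action extends to a point-norm continuous gauge action on $\ca_e(\N\T_X^+)$. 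Writing $j$ for the composition $X\to\N\T_X^+\to\ca_e(\N\T_X^+)$, the copy $X_e=\A$ sits completely isometrically inside $\N\T_X^+$, so $j_e$ is faithful and $j_*$ is a gauge-compatible surjection. Thus $\ca_e(\N\T_X^+)$ satisfies properties (i) and (ii) defining $\N\O_X^r$.

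\textbf{The Key Lemma and the identification with $\N\O_X^r$.} The crux is the following: any gauge-compatible, Nica-covariant isometric representation $\psi\colon X\to\B$ with $\psi_e$ faithful is \emph{completely isometric} on $\N\T_X^+$. Granting this, $\ca(\{\psi_p(X_p)\}_{p\in P})$ is a $\ca$-cover of $\N\T_X^+$, so the universal property of the $\ca$-envelope supplies the surjection $q\colon\ca(\{\psi_p(X_p)\})\to\ca_e(\N\T_X^+)$ required by property (iii). Hence $\ca_e(\N\T_X^+)$ meets all three defining properties of the co-universal object, and the uniqueness in \cite[Theorem 4.1]{CLSV11} yields the isomorphism $\ca_e(\N\T_X^+)\cong\N\O_X^r$ on generators.

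\textbf{Proof sketch of the Key Lemma, and Sehnem's algebra.} To establish the Key Lemma I would use the gauge action to equip $\N\T_X^+$ with its $P$-grading into spectral subspaces, and then, via a Fej\'er/Ces\`aro average over $\widehat{G}$, reduce the norm of a finitely supported element $\sum_p\psi_p(x_p)$ to data on the individual graded pieces; there relations (ii) and (iii) in the definition of an isometric representation, together with faithfulness of $\psi_e$, force each piece to carry the same norm as in the universal Fock representation, and a density argument upgrades this to all of $\N\T_X^+$. For Sehnem's algebra $X_e\times_X P$ I would argue symmetrically: its defining strong-covariance relations provide a faithful copy of $X_e$ and an equivariant gauge action, so it too is a gauge-compatible $\ca$-cover of $\N\T_X^+$ satisfying the hypotheses of the Key Lemma; invoking Sehnem's own co-universality (or running property (iii) directly) together with \cite[Theorem 4.1]{CLSV11} identifies it with $\N\O_X^r$, hence with $\ca_e(\N\T_X^+)$, again compatibly with generators.

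\textbf{Main obstacle.} The genuine difficulty is the Key Lemma. In the lattice-ordered, product-system setting, controlling the norm of an arbitrary polynomial $\sum_p\psi_p(x_p)$ requires the Ces\`aro averages over $\widehat{G}$ to converge appropriately and the compact-alignment and Nica-covariance relations to interact cleanly with the grading; the passage from finitely supported elements to the norm closure, and the verification that faithfulness of $\psi_e$ alone propagates to complete isometry on every graded component, is where the technical weight concentrates.
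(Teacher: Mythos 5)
You should know at the outset that this paper contains no proof of Theorem~\ref{T:CNP-envelope}: it is imported verbatim as ``the central result of \cite{DorK}'', so the only meaningful comparison is with the proof given there. At the level of architecture your proposal does match \cite{DorK}: one checks that $\cenv(\N\T_X^+)$ carries a gauge action and a faithful copy of $X_e$ (your verification of this part is fine, and Nica covariance of $j$ comes for free since $j$ factors through the $*$-homomorphism $\N\T_X \rightarrow \cenv(\N\T_X^+)$), one invokes the uniqueness theorem \cite[Theorem 4.1]{CLSV11}, and the entire weight falls on your Key Lemma, with Sehnem's gauge-invariant uniqueness theorem handling item (iii). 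You have located the crux correctly.

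The genuine gap is in your sketch of the Key Lemma. A Fej\'er/Ces\`aro average over $\widehat{G}$ recovers the Fourier coefficients $\psi_p(x_p)$ of a polynomial $\sum_p \psi_p(x_p)$, and it is true that each coefficient has the Fock norm ($\|\psi_p(x_p)\|=\|x_p\|$ once $\psi_e$ is injective); but the norm of an analytic polynomial is \emph{not} a function of the norms of its graded pieces, so ``forcing each piece to carry the same norm'' proves nothing about the sum. Two facts make this concrete. First, fiber-wise isometry is cheap: for the trivial system over $(\bbZ,\bbN)$ the representation sending every fiber to scalars ($\psi_n(\lambda)=\lambda$) is isometric, Nica-covariant, with $\psi_e$ faithful and all fiber norms correct, yet $\|\sum_n\psi_n(\lambda_n)\|=|\sum_n\lambda_n|$ collapses; so gauge compatibility must enter beyond merely supplying a grading. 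Second, even in the gauge-compatible case the desired equality is a theorem with real content: for the same trivial system it says that the disc algebra norm of $\sum\lambda_n z^n$ is attained in $C(\bbT)$ --- the maximum modulus principle --- and the Key Lemma is a sweeping generalization of that fact. What actually closes the gap is representation-theoretic, not coefficient bookkeeping: either the route of \cite{DorK}, which first embeds $\N\T_X^+$ completely isometrically into Sehnem's algebra $X_e\times_X P$ and then shows the Shilov ideal vanishes using her gauge-invariant uniqueness theorem (so in \cite{DorK} item (iii) is the \emph{engine} of the proof, not a corollary as in your outline); or a direct compression argument --- since $\widehat{G}$ is compact the expectation of $\B=\ca(\psi(X))$ onto its fixed-point algebra is faithful, so ($G$ being abelian, hence amenable) $\B$ maps canonically into the reduced cross-sectional algebra of its Fell bundle acting on $\oplus_{g}\B_g$, and the submodule $\oplus_{p\in P}[\psi_p(X_p)\B_e]\cong \F_X\otimes_{X_e}\B_e$ is invariant under the analytic algebra and implements the induced Fock representation, giving $\|\psi_*(a)\|\geq\|l_*(a)\|$ for $l$ the Fock representation. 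Without one of these mechanisms your proposal is missing its technical heart.
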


\subsection{Crossed products of $\ca$-algebras}
Let $(\A, \G, \alpha)$ be a $\ca$-dynamical system and $\pi\colon \A \rightarrow B(\H)$ a representation. If $\tilde{\H}=\H \otimes L^2(\G)$, then we have a representation $\tilde{\pi}: \A \rightarrow B(\tilde{\H})$, with 
\[
\big(\tilde{\pi}(a)f\big)(\xi)=\pi(\alpha_{\xi^{-1}}(a))f(\xi), \mbox{ where } f \in L^2(\G, \H), \xi \in \G,
\]
and we also also the inflated left regular representation $\lambda\otimes I$ of $\G$. The pair $(\tilde{\pi}, \lambda\otimes I)$ forms a covariant representation of the dynamical system $(\A, \G, \alpha)$ and induces a representation 
\[
\pi_{\#}: \A\cpf \longrightarrow B(\tilde{\H});
\]
this representation is called the regular representation induced by $\pi$.

Let $(\A, \G, \alpha)$ be a $\ca$-dynamical system and $\pi\colon \A \rightarrow B(\H)$ a representation. It is well-known that $\pi$ is non-degenerate if and only if $\pi_{\#}$ is.  Proposition~\ref{l;elem} below elaborates on that theme. But first we need the following.

\begin{lemma} \label{l;elem}
Let $(\A, \G, \alpha)$ be a $\ca$-dynamical system and let $Y \subseteq \A$. If $(\pi, U, \H)$ is a covariant representation of $(\A, \G, \alpha)$, then,
\[
[(\pi\rtimes U)(Y\cpf)\H ]= [\pi(Y)\H]
\]
\end{lemma}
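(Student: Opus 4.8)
The plan is to establish the two inclusions separately, using the integrated form of $(\pi, U)$ for the easy containment and a concentration-at-the-identity argument for the reverse one. Throughout I read $Y\cpf$ as the closed linear span in $\A\cpf$ of the functions $yf \in C_c(\G,\A)$ given by $s\mapsto y\,f(s)$, with $y\in Y$ and $f\in C_c(\G)$; note that when $Y=\A$ this recovers all of $\A\cpf$, so the lemma specializes to the familiar fact that $\pi\rtimes U$ is non-degenerate exactly when $\pi$ is.

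First I would unwind the left-hand side on these generators. For $y\in Y$, $f\in C_c(\G)$ and $h\in\H$, the integrated form yields
\[
(\pi\rtimes U)(yf)h = \int_{\G} f(s)\,\pi(y)\,U_s h \, ds = \pi(y)\Big(\int_{\G} f(s)\,U_s h\, ds\Big),
\]
where the scalar $f(s)$ has been pulled past $\pi(y)$. Since $\int_{\G} f(s)\,U_s h\, ds$ is a vector of $\H$, the left-hand vector lies in $\pi(y)\H\subseteq[\pi(Y)\H]$. As such generators are dense in $Y\cpf$ and $\pi\rtimes U$ is contractive, passing to closed linear spans gives $[(\pi\rtimes U)(Y\cpf)\H]\subseteq[\pi(Y)\H]$.

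For the reverse inclusion I would fix $y\in Y$ and $h\in\H$ and realize $\pi(y)h$ as a limit of vectors from the left-hand set. Choose a net $(f_i)\subseteq C_c(\G)$ of nonnegative functions with $\int_{\G} f_i\, ds = 1$ and supports shrinking to $\{e\}$. Because $U$ is a strongly continuous unitary representation with $U_e=I$, the estimate
\[
\Big\|\int_{\G} f_i(s)\,U_s h\, ds - h\Big\| \leq \int_{\G} f_i(s)\,\|U_s h - h\|\, ds \longrightarrow 0
\]
shows that $\int_{\G} f_i(s)\,U_s h\, ds \to h$. Combining this with the identity from the first step, $(\pi\rtimes U)(y f_i)h = \pi(y)\int_{\G} f_i(s)\,U_s h\, ds \to \pi(y)h$, and since each $y f_i$ is a generator of $Y\cpf$ we conclude $\pi(y)h\in[(\pi\rtimes U)(Y\cpf)\H]$. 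Taking closed spans over $y\in Y$ and $h\in\H$ gives the remaining inclusion.

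The forward containment is routine, being just the computation of the integrated representation on generators. The genuine point requiring care is the reverse inclusion: one must take the approximating net $(y f_i)$ \emph{inside} $Y\cpf$, and then invoke strong continuity of $U$ together with $U_e=I$ to recover the bare vector $h$ in the limit. This concentration argument at the group identity is the crux; everything else follows from the definition of the integrated form and the density of the generators in $Y\cpf$.
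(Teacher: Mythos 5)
Your proof is correct and follows essentially the same route as the paper: the forward inclusion is the routine computation of the integrated form on elementary generators, and the reverse inclusion is obtained by concentrating bump functions $f_i\geq 0$ with $\int f_i = 1$ at the group identity and invoking strong continuity of $U$, exactly the paper's argument (the paper phrases it with a single $\epsilon$-bump function and a weak estimate against unit vectors, while you pass to a net and estimate in norm, but these are the same idea).
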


\begin{proof}
We need only to verify that 
\[
 [\pi(Y)\H] \subseteq [(\pi\rtimes U)(Y\cpf)\H ]
\]
as the other inclusion is trivial. Let $y \in Y$, $h \in \H$ and $\epsilon >0$. Let $\U$ be a neighborhoud of the identity in $\G$ so that 
\[
\|(U_{\zeta} -I)h\|\leq \epsilon, \mbox{ for all } \zeta \in \U.
\]
 Consider a positive function $g \in C_c(\G)$ with support contained in $\U$ so that $\int g(\zeta)d\zeta= 1$ and let $f \in C_c(\G, Y)$ with $f(\zeta) := g(\zeta)y$, $\zeta \in \G$. Then,
 \begin{align*}
| \langle (\pi\rtimes U)(f)h -\pi(y)h\mid k\rangle|&=\Big| \Big\langle \left( \int g(\zeta) \pi(y)U_{\zeta}d\zeta  - \int g(\zeta) \pi(y)d\zeta \right)h \mid k\Big\rangle\Big|\\
& \leq \int |\langle\pi(y)(U_{\zeta}-I)h \mid k\rangle| g(\zeta)d\zeta \leq \epsilon \|y\|,
 \end{align*}
 for any unit vector $k \in \H$. This estimate suffices to prove the lemma.
\end{proof}

\begin{proposition} \label{l;elem}
Let $(\A, \G, \alpha)$ be a $\ca$-dynamical system and let $\pi \colon \A\rightarrow B(\H)$ inducing a representation $\pi_{\#}\colon \A\cpf \rightarrow B(\tilde{\H})$. If $Y\subseteq \A$ is $\alpha$-invariant, then 
 \[
[\pi_{\#}(Y\cpf)\tilde{\H}]= L^2(\G, [\pi(Y)\H]).
\]
\end{proposition}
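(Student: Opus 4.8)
The plan is to reduce the statement to the preceding lemma and then carry out a fibrewise computation on $\tilde{\H}=L^2(\G,\H)$. First I would apply the preceding lemma (the covariant-representation version) to the covariant pair $(\tilde{\pi},\lambda\otimes I)$, whose integrated form is precisely $\pi_{\#}$. This immediately gives
\[
[\pi_{\#}(Y\cpf)\tilde{\H}]=[\tilde{\pi}(Y)\tilde{\H}],
\]
so that the whole problem collapses to identifying $[\tilde{\pi}(Y)\tilde{\H}]$ with $L^2(\G,[\pi(Y)\H])$. Note that this reduction does not yet use the $\alpha$-invariance of $Y$; that hypothesis enters only in the second half.

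For the identification I would view $\tilde{\pi}(a)$ as the decomposable operator on $L^2(\G,\H)$ with fibre $\pi(\alpha_{\xi^{-1}}(a))$ at $\xi$, i.e.\ $(\tilde{\pi}(a)f)(\xi)=\pi(\alpha_{\xi^{-1}}(a))f(\xi)$. The inclusion $[\tilde{\pi}(Y)\tilde{\H}]\subseteq L^2(\G,[\pi(Y)\H])$ is then immediate: for $a\in Y$ and $f\in\tilde{\H}$, $\alpha$-invariance gives $\alpha_{\xi^{-1}}(a)\in Y$, so $(\tilde{\pi}(a)f)(\xi)\in\pi(Y)\H$ for a.e.\ $\xi$; since $L^2(\G,[\pi(Y)\H])$ is a closed subspace, it contains the closed span.

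The substance is the reverse inclusion. I would show that every elementary tensor $g\otimes v$ with $g\in C_c(\G)$ and $v=\pi(a)h$ (where $a\in Y$, $h\in\H$) lies in $[\tilde{\pi}(Y)\tilde{\H}]$, since such tensors span a dense subspace of $L^2(\G,[\pi(Y)\H])$. The idea is an \emph{untwisting by localisation}: since $\xi\mapsto\alpha_{\xi^{-1}}(a)$ is norm-continuous, on the compact set $\supp g$ I can choose a finite open cover $\{V_i\}$ with points $\xi_i\in V_i$ such that $\|\alpha_{\xi^{-1}\xi_i}(a)-a\|<\epsilon$ for $\xi\in V_i$, together with a subordinate partition $g=\sum_i g_i$. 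Then $\sum_i\tilde{\pi}(\alpha_{\xi_i}(a))(g_i\otimes h)$ lies in the linear span of $\tilde{\pi}(Y)\tilde{\H}$ (here $\alpha$-invariance is used again, to guarantee $\alpha_{\xi_i}(a)\in Y$), and its value at $\xi$ is $\sum_i g_i(\xi)\pi(\alpha_{\xi^{-1}\xi_i}(a))h$, which differs from $(g\otimes v)(\xi)=\sum_i g_i(\xi)\pi(a)h$ by at most $\epsilon$ on each fibre of the relevant support. A routine $L^2$-estimate then bounds the total error by a constant multiple of $\epsilon\,\|h\|\,\|g\|$, so $g\otimes v$ is approximated arbitrarily well; taking closed spans yields $L^2(\G,[\pi(Y)\H])\subseteq[\tilde{\pi}(Y)\tilde{\H}]$ and hence the claim.

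The main obstacle is precisely this reverse inclusion. The twist $\alpha_{\xi^{-1}}$ in the definition of $\tilde{\pi}$ means one cannot realise a prescribed fibre value $\pi(a)h$ by a single operator $\tilde{\pi}(b)$, so the approximation must be assembled locally, using the partition of unity together with the norm-continuity of the action; controlling the overlaps of the cover when estimating the $L^2$-error is the one place that demands genuine care.
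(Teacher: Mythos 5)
Your proposal is correct and follows essentially the same route as the paper: reduce to $[\tilde{\pi}(Y)\tilde{\H}]=L^2(\G,[\pi(Y)\H])$ via the preceding lemma applied to the covariant pair $(\tilde{\pi},\lambda\otimes I)$, then approximate a dense family of compactly supported functions by finite sums $\sum_i \tilde{\pi}(\alpha_{\xi_i}(a))(\,\cdot\,)$, using compactness of supports, norm-continuity of the action, and $\alpha$-invariance of $Y$ to untwist locally. The only (immaterial) difference is that you localize with a partition of unity on elementary tensors $g\otimes\pi(a)h$, whereas the paper uses disjoint Borel sets $E_i$ and general functions $\zeta\mapsto\pi(f(\zeta))h$ with $f\in C_c(\G,Y)$.
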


  \begin{proof}
In view of Lemma~\ref{l;elem} we need to prove that
 \[
[\tilde{\pi}(Y)\tilde{\H}] = L^2(\G, [\pi(Y)\H]).
 \]
Towards this end notice that $L^2(\G, [\pi(Y)\H])$ is generated by functions of the form
\begin{equation}\label{e:kappa}
\kappa(\zeta) = \pi(f(\zeta))h, \, \zeta \in \G,
\end{equation}
where $ f \in C_c(\G, Y)$ and $h \in \H$. Indeed, $L^2(\G,  [\pi(Y)\H])$ is generated as a Hilbert space by functions of the form 
\[
\lambda(\zeta) = g(\zeta)\pi(y)h= \pi(g(\zeta)y)h, \, \zeta \in \G,
\]
where $g \in C_c(G)$, $y \in Y$ and $h \in \H$. But such functions are as in (\ref{e:kappa}).

Consider now a function $\kappa$ as in (\ref{e:kappa}). For any $ \zeta \in \G$, there exists a neighborhood $\U_{\zeta}$ of $\zeta$ so that 
\begin{equation} \label{e;ah}
\| f(\zeta) - \alpha_{\xi^{-1}} \big(\alpha_{\zeta }(f(\zeta))\big)\|= \|\alpha_{\zeta^{-1}}\big(\alpha_{\zeta}(f(\zeta))\big)- \alpha_{\xi^{-1}} \big(\alpha_{\zeta }(f(\zeta))\big)\|\leq \epsilon
\end{equation}
and
\begin{equation}\label{e;vah}
\|f(\zeta) -f(\xi)\|\leq \epsilon, 
\end{equation}
for all $\xi \in \U_{\zeta}$. The collection $\{ \U_{\zeta}\}_{\zeta\in \G}$ forms an open cover for $\supp f$ (= the support of $f$) and so we obtain an open subcover $\U_{\zeta_1}, \U_{\zeta_2}, \dots \U_{\zeta_n}$. Create a Borel collection of disjoint sets $E_1, E_2, \dots , E_n$ so that 
$$ \cup_{i=1}^nE_i=\cup_{i=1}^n \U_{\zeta_i}$$
and $E_i \subseteq \U_{\zeta_i}$, for all $i = 1, 2, \dots , n$. Consider the function 
\[
\kappa_0:=\sum_{i=1}^n \tilde{\pi}\big( \alpha_{\zeta_i}(f(\zeta_i))\big)(\chi_{E_i}h) \in \tilde{\pi}(Y) \tilde{\H}.
\]
Using (\ref{e;ah}) and (\ref{e;vah}) we see that poinlwise the function $\kappa_0$ is $2\epsilon \|h\|$-close to $\kappa$ and so in the $L^2$-norm the two functions are $2\epsilon \|h\|\mu(\supp f)$-close. This estimate suffices to complete the proof.
\end{proof}

%%%%%%%%%%%%%%%%%%%%%%%%%%%%%%%%%%%%%%%%%%%%%%%%%%%%%
%%%%%%%%%%%%%%%%%%%%%%%%%%%

\section{The Hao-Ng isomorphism for product systems and abelian gauge actions} \label{s;HaoNg}

If $\G$ is a locally compact group acting on a $\ca$-correspondence $(X, \C)$, then the Hao-Ng Theorem \cite[Theorem 2.10]{HN} asserts that
\[
\O_X \cpf \simeq \O_{X\cpf} 
\]
provided that $\G$ is an amenable locally compact group. This result is having an increasing impact on current $\ca$-algebra research as witnessed in \cite{Ab, Deaconu, DKQ, Sch}. The \textit{Hao-Ng isomorphism problem} asks whether the above isomorphism remains valid, for either the full or the reduced crossed product, if one moves beyond the class of amenable groups. This problem is currently under investigation by several $\ca$-algebraists \cite{BKQR, KQR, KQR2, KR16, KR}.

In this paper we study a generalization of the Hao-Ng isomorphism first introduced in \cite{DorK}. Let $X=\{X_p\}_{p \in P}$ be a product system over a quasi-lattice order $(G, P)$. An action $\alpha: \G \rightarrow \Aut \N\T_X$ of a locally compact group is said to be a generalized gauge action if $\alpha_s(X_p)\subseteq X_p$, for all $s \in \G$ and $p \in P$. (In what follows we always identify $X$ with its image inside either $\N\T_X$ or $\N\O_X^r$, whatever convenient.) Such an action allows us to consider a new product system $X \cpf $ over $(G, P)$ defined as follows. 

For each $p \in P$, let $X_p\cpr$ be the closed subspace of $\N\T_X \cpr$ generated by $C_c(\G , X_p) \subseteq \N\T_X\cpr$. Just as in \cite[Lemma 7.11]{KR16}, one can verify that for every $p \in P$ we have that $X_p\cpr$ is an $X_e \cpr$-correspondence with inner product defined by $\lip f,g\rip =f^*g$ for $f , g \in X_p \cpf\subseteq\N\T_X \cpr$. Furthermore, it is easily seen that $(X_p \cpr)(X_q \cpr) \subseteq X_{pq} \cpr$ is dense for any $p,q \in P$. (Most of these claims can actually be proven using ideas appearing in Lemma~\ref{l;inclusion} and Remark~\ref{r;inclusion}.) Therefore $\{ X_p\cpr \}_{p \in P}$ forms a product system over $(G, P)$ that we denote as $X \cpr$. The Hao-Ng isomorphism problem asks whether 
\begin{equation} \label{e;HaoNg}
\N\O^r_X\cpr \simeq \N\O^r_{X\cpr},
\end{equation}
with a similar problem also holding for the full crossed product. The main goal of this section is to resolve (\ref{e;HaoNg}) in the case where both $G$ and $\G$ are abelian. As a consequence we obtain a generalization of Takai duality in the context of product systems over abelian lattice orders. In the case of $\ca$-correspondences this was first done by Abadie~\cite{Ab}.

The $\ca$-algebra $ \N\T_X\cpr $ also contains a non-selfadjoint crossed product algebra which we denote as $\N\T^+_X\cpr$, which is the norm closed algebra generated by $X\cpr$. Therefore, we have the inclusions
\[
X\cpr \subseteq \N\T^+_X\cpr\subseteq \N\T_X \cpr.
\]
and $\N\T_X \cpr$ is a $\ca$-cover for $\N\T^+_X\cpr$.

\begin{remark}\label{R:clarify copy}
(i) The reader familiar with the theory of non-selfadjoint crossed products (as developed in \cite{KR16}) recognizes that $\N\T^+_X\cpr$ coincides with the reduced crossed product of the dynamical system $( \N\T_X^+, \G, \alpha)$, as defined in \cite[Definition 3.17]{KR16}. Indeed this follows from an immediate application of \cite[Corollary 3.15]{KR16}.

(ii) When $G$ is abelian, we have that $\N\T_X^+\cpr$ is completely isometrically isomorphic to the natural subalgebra of $\N\O^r_X \cpr$ generated by all $C_0(G, X_p) \subseteq \N\O^r_X \cpr$, $p \in P$. This follows from Theorem \ref{T:CNP-envelope} and \cite[Corollary 3.16]{KR16}. Hence, we also obtain an injective copy of $X\cpr$ sitting naturally inside $\N\O^r_X\cpr$.
\end{remark} 

In order to establish the Hao-Ng isomorphism in our context, we need to provide a workable criterion for verifying the compact alignment of product systems. This is done in Proposition~\ref{p;criterion}. The lack of such a criterion was an impediment in \cite{DorK} that forced us to deal with actions of discrete groups only. We start by establishing some useful facts. 

Let $X=\{X_p\}_{p \in P}$ be a product system over $P$ and $\psi\colon X \rightarrow B(\H)$ a Nica-covariant representation. Recall from Section~\ref{ss;NT} that for each $p \in P$ we have a representation
\[
\psi_p\colon  \K(X_p) \longrightarrow B(\H); \theta_{x, y}\longmapsto \psi_p(x)\psi_p(y)^*, \,\, x , y \in X_p
\]
and its strict-sot continuous extension $\rho_p\colon \L(X_p)\rightarrow B(\H)$. We have the identities
\[
\rho_p(S) \psi_p(x)h= \psi_p(Sx)h, \mbox{ and}
\]
\[
\rho_{pq}(S\otimes I)=\rho_p(S)\rho_{pq}(I)=\rho_{pq}(I)\rho_p (S),
\]
for any $S \in \L(X_p)$ $x \in X_p$ and $h \in \H$. 
In general, $\rho_p(S)$ commutes with any $\rho_q(I)$, provided that $p\leq q$. Also
\begin{align*}
\psi_{pq}\big((S\otimes I)x\big) &= \rho_{pq}(S\otimes I)\psi_{pq}(x)\\
					&=\rho_p(S)\rho_{pq}(I)\psi_{pq}(x)\\
					&=\rho_p(S)\psi_{pq}(x).
\end{align*}
Hence if $S\in \K(X_p)$ and $T\in \K(X_q)$, then 
\[
\rho_{p\vee q}\big( (S\otimes I)(T \otimes I)x\big)=\rho_p (S)\rho_q (T)\psi_{p \vee q}(x) 
			=\psi_p (S)\psi_q(T)\psi_{p \vee q}(x)
\]
and so $\rho_{p\vee q}\big( (S\otimes I)(T \otimes I)\big)=\psi_p (S)\psi_q(T)\rho_{p \vee q}(I)$. Since we have assumed that $\psi$ is Nica-covariant, we have
\begin{align*}
\psi_p (S)\psi_q(T)\rho_{p \vee q}(I) &= \psi_p(S) \rho_{p \vee q}(I) \psi_q(T) \\
		&=\psi_p(S) \rho_p (I) \rho_q(I)\psi_q(T) \\
		&=\psi_p(S) \psi_q(T)
\end{align*}
and therefore
\[
\rho_{p\vee q}\big( (S\otimes I)(T \otimes I)\big)=\psi_p (S)\psi_q(T).
\]
This last identity plays an important role in the following

\begin{proposition} \label{p;criterion}
Let $X=\{X_p\}_{p \in P}$ be a product system over a lattice ordered abelian group $(G, P)$. Let $\psi\colon X \rightarrow B(\H)$ a faithful Nica-covariant representation that satisfies the following property: for any $s, t \in P$ and $x \in X_p$, $y \in X_q$ we have,
\[
\psi_p(x)^*\psi_q(y)\in \overline{\spn}\{\psi (z)\psi(w)^*\mid z \in X_{p^{-1}(p \vee q)}, w \in X_{q^{-1}(p \vee q)}\}.
\]
Then $X$ is compactly aligned. 

Conversely, if $X$ is compactly aligned, then any representation of $X$ satisfies the above property.
\end{proposition}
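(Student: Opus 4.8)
The plan is to exploit the displayed identity $\rho_{p\vee q}\big((S\otimes I)(T\otimes I)\big)=\psi_p(S)\psi_q(T)$ in both directions, transporting everything to the coefficient modules via the annihilation operators attached to $\psi$. Throughout write $r:=p\vee q$, $a:=p^{-1}r$ and $b:=q^{-1}r$, so that $X_r\cong X_p\otimes X_a\cong X_q\otimes X_b$. For $x\in X_p$ let $L_x\in\L(X_r,X_a)$ be the adjoint of the creation map $T_x\colon X_a\to X_r$, $\zeta\mapsto x\zeta$; thus $L_x(x'\zeta)=\phi_a(\langle x,x'\rangle)\zeta$ and $L_x^*(\zeta)=x\zeta$, and one checks directly from (ii) and (iii) that $\psi_p(x)^*\psi_r(\xi)=\psi_a(L_x\xi)$ for all $\xi\in X_r$. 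Define $L_y\in\L(X_r,X_b)$ analogously, with $\psi_q(y)^*\psi_r(\xi)=\psi_b(L_y\xi)$.

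For the forward implication I would first take $S=\theta_{x',x}$ and $T=\theta_{y,y'}$, so that $\psi_p(S)\psi_q(T)=\psi_p(x')\big(\psi_p(x)^*\psi_q(y)\big)\psi_q(y')^*$. Inserting the hypothesis $\psi_p(x)^*\psi_q(y)\in\overline{\spn}\{\psi_a(z)\psi_b(w)^*\}$ and using $\psi_p(x')\psi_a(z)=\psi_r(x'z)$ together with $\psi_b(w)^*\psi_q(y')^*=\psi_r(y'w)^*$ shows that $\psi_p(S)\psi_q(T)$ is a norm limit of elements $\psi_r\big(\sum_i\theta_{x'z_i,\,y'w_i}\big)$, hence lies in the closed $*$-subalgebra $\psi_r(\K(X_r))$; write $\psi_p(S)\psi_q(T)=\psi_r(K)$ with $K\in\K(X_r)$. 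Comparing with the displayed identity gives $\rho_r\big((S\otimes I)(T\otimes I)-K\big)=0$; applying this to $\psi_r(\xi)h$ and using that $\psi_r$ is isometric on $X_r$ (as $\psi_e$ is faithful) forces $(S\otimes I)(T\otimes I)=K\in\K(X_r)$. Since $(S,T)\mapsto(S\otimes I)(T\otimes I)$ is bilinear and norm continuous and $\K(X_r)$ is closed, this passes from rank-one operators to all of $\K(X_p)\times\K(X_q)$, which is exactly compact alignment.

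For the converse, fix $x\in X_p$, $y\in X_q$ and consider the single operator $\Phi:=L_xL_y^*\in\L(X_b,X_a)$ (so $\Phi w=L_x(yw)$). The crux is that \emph{$\Phi$ is compact}, and this is where compact alignment is used decisively: since $\theta_{x,x}\otimes I=L_x^*L_x=:P$ and $\theta_{y,y}\otimes I=L_y^*L_y=:Q$, compact alignment yields $PQ\in\K(X_r)$, whence $QP=(PQ)^*\in\K(X_r)$ and $(Q^{1/2}PQ^{1/2})^2=Q^{1/2}P(QP)Q^{1/2}\in\K(X_r)$; as $Q^{1/2}PQ^{1/2}\ge0$, it is itself compact, so $L_xQ^{1/2}\in\K(X_r,X_a)$ (recall $T^*T\in\K$ forces $T\in\K$) and therefore $\Phi\Phi^*=(L_xQ^{1/2})(L_xQ^{1/2})^*$ and $\Phi$ are compact. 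Writing $\Phi=\lim_n\sum_k\theta_{z_k^n,w_k^n}$ in $\K(X_b,X_a)$ and letting $\Psi\colon\K(X_b,X_a)\to B(\H)$, $\theta_{z,w}\mapsto\psi_a(z)\psi_b(w)^*$, be the contractive off-diagonal corner of the representation of the linking algebra $\K(X_a\oplus X_b)$ induced by $\psi_a,\psi_b$ (they share the coefficient representation $\psi_e$), the element $\Psi(\Phi)$ lies in $\overline{\spn}\{\psi_a(z)\psi_b(w)^*\}$. It then remains to identify $\psi_p(x)^*\psi_q(y)=\Psi(\Phi)$, which I would establish by showing the two operators agree on $\rho_b(I)\H$ and both kill its orthocomplement: for every $w\in X_b$ one has $\psi_p(x)^*\psi_q(y)\psi_b(w)=\psi_a(\Phi w)=\Psi(\Phi)\psi_b(w)$, while $\Psi(\Phi)(I-\rho_b(I))=0$ is immediate and $\psi_p(x)^*\psi_q(y)(I-\rho_b(I))=\psi_p(x)^*\rho_r(I)\psi_q(y)(I-\rho_b(I))=0$ follows from Nica covariance ($\rho_p(I)\rho_q(I)=\rho_r(I)$) and the fact that $\psi_q(y)^*\rho_r(I)\psi_q(y)=\rho_b(\phi_b(\langle y,y\rangle))$ is supported under $\rho_b(I)$.

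The step I expect to be the main obstacle is precisely the compactness of $\Phi=L_xL_y^*$. The naive route---writing $\psi_p(x)^*\psi_q(y)=\psi_p(x)^*\rho_r(I)\psi_q(y)$ and replacing $\rho_r(I)$ by an approximate unit $u_\mu$ of $\K(X_r)$---fails, because the diagonal correction $\psi_p(x)^*\rho_r(I-u_\mu)\psi_p(x)=\rho_a\big(\phi_a(\langle x,x\rangle)-L_xu_\mu L_x^*\big)$ converges only $*$-strongly and not in norm (creation operators such as $L_x^*$ are not compact in general), so the formal expansion never lands in the norm-closed span. Compact alignment is exactly the hypothesis that repairs this: it forces the \emph{product} $PQ$ into $\K(X_r)$, the functional-calculus step converts this into genuine compactness of $\Phi$, and the support argument then delivers the identity $\psi_p(x)^*\psi_q(y)=\Psi(\Phi)$ without any $*$-strong limit.
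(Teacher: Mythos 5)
Your proof is correct, and its two halves compare differently with the paper. The forward implication is essentially the paper's own argument: both rest on the identity $\rho_{p\vee q}\big((S\otimes I)(T\otimes I)\big)=\psi_p(S)\psi_q(T)$ (derived in the paper just before the proposition from Nica covariance in the form $\rho_p(I)\rho_q(I)=\rho_{p\vee q}(I)$), both specialize to $S=\theta_{x',x}$, $T=\theta_{y,y'}$, use the hypothesis to place $\psi_p(S)\psi_q(T)$ inside $\psi_{p\vee q}\big(\K(X_{p\vee q})\big)$, and then use faithfulness of $\psi$ to pull this back to the equality $(S\otimes I)(T\otimes I)=K\in\K(X_{p\vee q})$, finishing by bilinearity and density. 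Where you genuinely depart is the converse: the paper proves nothing there, citing \cite[Proposition 5.10]{Fow02}, and your self-contained proof differs in method from the argument behind that citation. Fowler inserts approximate units $(u_\lambda)$ of $\K(X_p)$ and $(v_\mu)$ of $\K(X_q)$ at levels $p$ and $q$, where norm convergence is not an issue (since $u_\lambda x\to x$ in $X_p$, one gets $\psi_p(x)^*\psi_q(y)=\lim_{\lambda,\mu}\psi_p(u_\lambda x)^*\psi_q(v_\mu y)$ in norm); Nica covariance and compact alignment then put $\psi_p(u_\lambda)\psi_q(v_\mu)$ inside $\psi_{p\vee q}\big(\K(X_{p\vee q})\big)$, and every resulting term visibly lies in the required span. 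So your closing diagnosis is half right: inserting an approximate unit of $\K(X_{p\vee q})$ at the top level does fail for exactly the reason you give, but the approximate-unit strategy succeeds when the units are inserted at levels $p$ and $q$ --- that is the standard proof. Your alternative engine --- from $PQ\in\K(X_{p\vee q})$ deduce that $Q^{1/2}PQ^{1/2}\geq 0$ has compact square and is therefore compact, hence $L_xQ^{1/2}$, $\Phi\Phi^*$ and $\Phi=L_xL_y^*$ are compact, then transport $\Phi$ into $B(\H)$ by the linking-algebra corner map and identify it with $\psi_p(x)^*\psi_q(y)$ via the support argument --- is sound: each ingredient ($A\geq 0$ with $A^2$ in the ideal $\K$ forces $A\in\K$; $T^*T\in\K$ forces $T\in\K$ for maps of Hilbert modules; contractivity of the corner map; and $\rho_{p\vee q}(I)\psi_q(y)(I-\rho_b(I))=0$, which follows from $X_{p\vee q}=[X_qX_{q^{-1}(p\vee q)}]$) is a standard fact or a routine verification. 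What your route buys is the representation-independent structural statement that compact alignment alone forces $L_xL_y^*$ to be compact; what the cited route buys is brevity, avoiding the functional calculus and the linking algebra altogether.
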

\begin{proof}
Assume that $\psi\colon X \rightarrow B(\H)$ satisfies the stated property and let $x, x'\in X_p$, $y,y'\in X_q$. Then 
\[
\psi_p(x')\psi_p(x)^*\psi_q(y)\psi_q(y')^* \in X_{p \vee q}X_{p \vee q}^*=\psi_{p\vee q}\big(\K(X_{p\vee q})\big).
\]
Let $K \in \K(X_{p\vee q})$ so that $\psi_{p \vee q}(K) = \psi_p(x')\psi_p(x)^*\psi_q(y)\psi_q(y')^*$. Then for any $ \xi \in X_{p \vee q}$ we have
\begin{align*}
\psi_{p\vee q}\big( (\theta_{x',x}\otimes I)(\theta_{y, y'}\otimes I)\xi \big) &=\rho_{p\vee q}\big( (\theta_{x',x}\otimes I)(\theta_{y, y'}\otimes I) \big)\psi_{p\vee q}(\xi ) \\
&=\psi_p(\theta_{x',x})\psi_q(\theta_{y,y'})\psi_{p\vee q}(\xi )  \\
&=\psi_p(x')\psi_p(x)^*\psi_q(y)\psi_q(y')^* \psi_{p\vee q}(\xi )  \\
&= \psi_{p \vee q}(K\xi).
\end{align*}
Since $\psi$ is faithful we obtain that $(\theta_{x',x}\otimes I)(\theta_{y, y'}\otimes I) = K \in \K(X_{p\vee q})$. This suffices to show that $X$ is compactly aligned.

The converse is proven in \cite[Proposition 5.10]{Fow02}.
\end{proof}

We will use this criterion to prove the permanence of compact alignment under crossed products by gauge actions. First, we need a large supply of Nica-covariant representations for $X\cpr$.

\begin{lemma} \label{L:supply}
 Let $(G,P)$ be an abelian, lattice ordered group and $X=\{X_p\}_{p \in P}$ a product system over $P$. Let $\alpha : \G \rightarrow \Aut \N\T_X$ be a generalized gauge action by a locally compact group $\G$. If $\psi \colon \cN\cO^r_X \rightarrow B(\H)$ is a $*$-representation, then the restriction of the regular representation 
 \[
 \psi_{\#} \colon \cN\cO_X \cpr \rightarrow B(\H \otimes L^2(\G))
 \]
 on the product system $X\cpr \subseteq \cN\cO_X^r \cpr$ forms a Nica-covariant representation of $X\cpr$.
 \end{lemma}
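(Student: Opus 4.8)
The goal is to show that the regular representation $\psi_{\#}$ of $\cN\cO_X\cpr$, restricted to the embedded product system $X\cpr$, is Nica-covariant. The natural strategy is to exploit the criterion just established in Proposition~\ref{p;criterion}: rather than verifying the defining Nica-covariance condition (\ref{D:NC1}) directly, I would show that the product $\psi_{\#,p}(f)^*\psi_{\#,q}(g)$, for $f \in X_p\cpr$ and $g \in X_q\cpr$, lies in the closed span of products $\psi_{\#}(h)\psi_{\#}(k)^*$ with $h \in X_{p^{-1}(p\vee q)}\cpr$ and $k \in X_{q^{-1}(p\vee q)}\cpr$. Since $(G,P)$ is a lattice order, $p \vee q$ always exists, and since $\psi$ is a $*$-representation of $\cN\cO^r_X$ whose restriction to $X$ is already Nica-covariant (being a representation of the co-universal algebra), the compact alignment of $X$ itself gives me, via the converse direction of Proposition~\ref{p;criterion}, the analogous relation at the level of $X$ before inflation.

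**Key steps.**
First I would recall the concrete form of the regular representation from the preliminaries: on $\widetilde{\H}=\H\otimes L^2(\G)$ the inflated representation $\tilde\psi$ acts by $(\tilde\psi(a)\xi)(s)=\psi(\alpha_{s^{-1}}(a))\xi(s)$, and $X_p\cpr$ is generated by $C_c(\G,X_p)$. The second step is to compute, for $f \in C_c(\G,X_p)$ and $g \in C_c(\G,X_q)$, the product $\psi_{\#}(f)^*\psi_{\#}(g)$ as an element of $B(\widetilde{\H})$, writing it as an integral (convolution) whose integrand involves $\psi_p(\alpha_{s^{-1}}(f(u)))^*\psi_q(\alpha_{s^{-1}}(g(v)))$ evaluated fiberwise. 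The third step is the crucial reduction: because $\alpha$ is a generalized gauge action, $\alpha_{s^{-1}}(f(u)) \in X_p$ and $\alpha_{s^{-1}}(g(v)) \in X_q$, so the converse half of Proposition~\ref{p;criterion} (applied to the faithful Nica-covariant representation $\psi$ of $X$, which is legitimate since $X$ is compactly aligned by Theorem~\ref{T:CNP-envelope}) tells me each such fiberwise product lies in $\overline{\spn}\{\psi(z)\psi(w)^*\mid z\in X_{p^{-1}(p\vee q)},\, w \in X_{q^{-1}(p\vee q)}\}$. The final step is to reassemble these fiberwise approximations into genuine elements of the inflated spans $\psi_{\#}(h)\psi_{\#}(k)^*$ with $h,k$ ranging over $C_c(\G, X_{p^{-1}(p\vee q)})$ and $C_c(\G, X_{q^{-1}(p\vee q)})$ respectively, using a partition-of-unity / uniform-continuity argument on the supports of $f$ and $g$ to control the approximation in operator norm. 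This reassembly is precisely the kind of estimate carried out in Proposition~\ref{l;elem} and its proof, and I would model it on that argument.

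**Main obstacle.**
The genuinely delicate point is not the algebra but the reassembly in the last step: the witnessing elements $z,w$ produced by Proposition~\ref{p;criterion} depend on the fiber variable $s \in \G$ (through $\alpha_{s^{-1}}$), so I cannot simply quote the $X$-level identity pointwise and integrate. I expect to need a uniform-continuity argument, choosing a finite cover of $\supp f \cup \supp g$ adapted to the action $\alpha$ and to the target span, and then patching together the local approximants with a subordinate Borel partition exactly as in the construction of $\kappa_0$ in the proof of Proposition~\ref{l;elem}. Controlling that the resulting piecewise-constant choice of $z$ and $w$ assembles into a bona fide $C_c$-function landing in the correct correspondence, while keeping the $L^2$- (hence operator-norm-) error small, is where the real work lies; the verification that $\psi_{\#}$ is a Nica-covariant representation then follows by combining this with the forward direction of Proposition~\ref{p;criterion}.
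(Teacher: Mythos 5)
Your proposal has a genuine gap, and it sits exactly at the step you present as the conclusion: Proposition~\ref{p;criterion} cannot deliver Nica covariance of $\psi_{\#}$, because Nica covariance is one of its \emph{hypotheses}. The forward direction of that proposition says: if $\psi$ is a \emph{faithful, Nica-covariant} representation satisfying the spanning property, then the \emph{product system} is compactly aligned. So even if you succeed in showing that $\psi_{\#,p}(f)^*\psi_{\#,q}(g)$ lies in $\overline{\spn}\{\psi_{\#}(h)\psi_{\#}(k)^* \mid h \in X_{p^{-1}(p\vee q)}\cpr,\ k \in X_{q^{-1}(p\vee q)}\cpr\}$, the only conclusion the criterion could yield is compact alignment of $X\cpr$ --- and only after you already know that $\psi_{\#}$ is Nica covariant and faithful. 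Your argument is therefore circular: it assumes the conclusion of the lemma in order to invoke the tool meant to prove it. In fact, what you have outlined (the fiberwise computation, the Lemma~\ref{l;inclusion}-style reassembly, then an appeal to Proposition~\ref{p;criterion}) is essentially the paper's proof of the \emph{next} result, the proposition that $X\cpr$ is compactly aligned; and that proof needs Lemma~\ref{L:supply} as an input, precisely to secure the Nica-covariance hypothesis of the criterion.

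The paper's actual proof is different and much shorter. Since $X\cpr$ is not yet known to be compactly aligned, Nica covariance must be verified in the projection form (\ref{D:NC2}), which makes sense for arbitrary product systems. By Proposition~\ref{l;elem} (applicable because each $X_p$ is $\alpha$-invariant, $\alpha$ being a generalized gauge action), one has $[\psi_{\#}(X_p\cpr)\tilde{\H}] = L^2(\G, [\psi(X_p)\H])$, so the range projections attached to $\psi_{\#}$ are just the inflations of the range projections attached to $\psi$. Since $\psi$ restricts to a Nica-covariant representation of the compactly aligned system $X$, its range projections commute and satisfy $[\psi(X_p)\H]\cap[\psi(X_q)\H]=[\psi(X_{p\vee q})\H]$; this identity passes through $L^2(\G,\cdot)$ and gives (\ref{D:NC2}) for $\psi_{\#}$. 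If you wanted to salvage your route you would need a bridge that appears in neither the paper nor your proposal: a proof that, for a nondegenerate representation, the spanning property by itself implies (\ref{D:NC2}). (This is in fact true --- one can check that $\rho_p(I)\rho_q(I)$ is a contractive idempotent with range $[\psi_{p\vee q}(X_{p\vee q})\H]$, hence an orthogonal projection --- but that is an additional lemma requiring proof, not a citation of Proposition~\ref{p;criterion}.)
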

 
 \begin{proof}
 Let $s, t \in P$. By Proposition~\ref{l;elem} we have
 \begin{align*}
 [\psi_{\#}(X_p\cpr)\tilde{\H}] \cap  [\psi_{\#}(X_q\cpr)\tilde{\H}] &=L^2(\G, [\psi(X_q)]) \cap L^2(\G, [\psi(X_q)])\\
 &=L^2(\G, [\psi(X_p) ] \cap [\psi(X_q)])\\
 &=L^2(\G, [\psi(X_{p\vee q})])\\
 &=  [\psi_{\#}(X_{p \vee q}\cpr)\tilde{\H}] 
 \end{align*}
 and the conclusion follows.
 \end{proof}
 
To prove the permanence of compact alignment we also need the following.

\begin{lemma} \label{l;inclusion}
Let $(G,P)$ be an abelian, lattice ordered group and $X=\{X_p\}_{p \in P}$ a product system over $P$. Let $\alpha : \G \rightarrow \N\T_X$ be a generalized gauge action by a locally compact group $\G$. Then, for any $p,q \in P$, we have 
\[
C_c (\G, X_pX_q^*)\subseteq C_c(\G, X_p)C_c(\G, X_q)^*,
\]
when considered as subsets of $\N\O^r_X\cpr$.
\end{lemma}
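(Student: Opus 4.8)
The plan is to establish the only nontrivial inclusion by a two-step approximation carried out directly inside $\N\O^r_X\cpr$. Since the reduced crossed-product norm is dominated by the $L^1$-norm, any $h\in C_c(\G,\N\O^r_X)$ satisfies $\|h\|\le\mu(\supp h)\,\sup_{s}\|h(s)\|$; hence it suffices to approximate a given $F\in C_c(\G,X_pX_q^*)$ uniformly, with supports inside a fixed compact set, by elements of the closed span $C_c(\G,X_p)C_c(\G,X_q)^*$. Two facts drive the argument. First, because $\alpha$ is a generalized gauge action each $\alpha_s$ is a $*$-automorphism with $\alpha_s(X_q)\subseteq X_q$, and applying this to $\alpha_{s^{-1}}$ forces $\alpha_s(X_q)=X_q$; consequently, for every fixed $s\in\G$ the set $\{x\,\alpha_s(y^*)\mid x\in X_p,\ y\in X_q\}$ has dense linear span in $X_pX_q^*$. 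Second, a direct computation with the crossed-product operations (with $\Delta$ the modular function of $\G$) gives
\[
(f*g^*)(s)=\int_{\G}\Delta(t^{-1}s)^{-1}\,f(t)\,\alpha_s\big(g(s^{-1}t)^*\big)\,dt,
\]
so that for $f\in C_c(\G,X_p)$ and $g\in C_c(\G,X_q)$ the values of $f*g^*$ already lie in $X_pX_q^*$; the content is the reverse containment.

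First I would realise the elementary functions $s\mapsto\chi(s)\,x\,\alpha_s(y^*)$, for $\chi\in C_c(\G)$, $x\in X_p$, $y\in X_q$, as crossed-product limits of such products. Taking $f=\chi\otimes x$ (that is, $f(t)=\chi(t)x$) and $g=u\otimes y$ with $u\in C_c(\G)$ positive, $\int u=1$, and supported near the identity, the formula above collapses to a scalar multiple of $x\,\alpha_s(y^*)$, the scalar being $\int_{\G}\chi(st)\,\Delta(t)\,u(t)\,dt$. As $u\to\delta_e$ this scalar converges to $\chi(s)$ uniformly in $s$ (the modular factor tends to $\Delta(e)=1$), with all supports contained in a fixed compact set. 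Hence each elementary function of this form lies in $C_c(\G,X_p)C_c(\G,X_q)^*$.

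Next I would approximate an arbitrary $F\in C_c(\G,X_pX_q^*)$ by a finite combination of these elementary functions. Fix $\epsilon>0$. For each $s_0\in\supp F$ the density noted above lets me write $F(s_0)\approx\sum_j x_j\,\alpha_{s_0}(y_j^*)$ as a finite sum (absorbing $\alpha_{s_0}^{-1}$ into the $X_q$-entries). By continuity of $F$ together with strong continuity of $\alpha$, the twisted function $s\mapsto\sum_j x_j\,\alpha_s(y_j^*)$ stays within $\epsilon$ of $F(s)$ throughout a neighbourhood of $s_0$. Covering the compact set $\supp F$ by finitely many such neighbourhoods and choosing a subordinate partition of unity $\{\chi_i\}\subseteq C_c(\G)$, the function
\[
G(s)=\sum_i\chi_i(s)\sum_j x_{i,j}\,\alpha_s(y_{i,j}^*)
\]
satisfies $\sup_s\|G(s)-F(s)\|\le\epsilon$ with support in a fixed compact set, so $F$ is a crossed-product limit of such $G$. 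Combined with the previous step this places $F$ in $C_c(\G,X_p)C_c(\G,X_q)^*$.

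I expect the main obstacle to be the bookkeeping forced by the gauge twist: the products $f*g^*$ naturally produce the $s$-dependent value $x\,\alpha_s(y^*)$ rather than $x\,y^*$, so the density statement and the partition-of-unity approximation must be organised for the twisted family $\{x\,\alpha_s(y^*)\}$ and then reconciled using continuity of $s\mapsto\alpha_s(y^*)$ and the vanishing of the modular contribution near the identity. Once this twist is tracked correctly, the remaining estimates are routine.
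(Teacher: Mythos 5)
Your proof is correct, but it takes a genuinely different route from the paper's. The paper argues algebraically and avoids all convolution estimates: it reduces to elementary tensors $f = x_px_q^*g$ with $g\in C_c(\G)$, observes that $x_q^*g$ is exactly the adjoint $h^*$ of an explicitly written $h\in C_c(\G,X_q)$ (namely $h(\xi)=\alpha_{\xi}(x_q)\Delta(\xi)\overline{g(\xi^{-1})}$, which takes values in $X_q$ precisely because the gauge action preserves fibers), and then, viewing $x_p$ as a multiplier of $\N\O^r_X\cpr$, inserts an approximate unit $\{e_i\}_{i\in\bbI}\subseteq C_c(\G,X_e)$ --- available because the product system is non-degenerate --- to get $f=\lim_i(x_pe_i)h^*$ with $x_pe_i\in C_c(\G,X_p)$. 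In the paper the $\alpha_s$-twist that you spend most of your effort tracking is thus absorbed, once and for all, into the closed-form function $h$. Your argument instead stays entirely at the level of $C_c$-functions: the explicit formula for $(f*g^*)(s)$, a Dirac net $u\to\delta_e$ in $C_c(\G)$ producing the twisted elementary functions $s\mapsto\chi(s)\,x\,\alpha_s(y^*)$, and then a partition of unity combined with $\alpha_s(X_q)=X_q$ and strong continuity of the action to approximate a general $F$ uniformly with supports in a fixed compact set. This is longer and requires the $L^1$-domination and uniform-convergence bookkeeping, but it buys two things: it needs neither the multiplier algebra nor an approximate unit of $\N\O^r_X\cpr$ (only a Dirac net in $C_c(\G)$), and your convolution formula simultaneously shows that $f*g^*$ takes values in $X_pX_q^*$, which is exactly the reverse containment needed for the stronger equality $X_pX_q^*\cpr=(X_p\cpr)(X_q\cpr)^*$ recorded in Remark~\ref{r;inclusion}.
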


\begin{proof}
Let $f \in C_c (\G, X_pX_q^*) \subseteq \N\O^r_X\cpr$ of the form $f=x_px_q^*g$, where $x_p \in X_p$, $x_q \in X_q$ and $g \in C_c(\G)$. Linear combinations of such elements form a dense subset of $C_c (\G, X_pX_q^*)$; hence it suffices to examine whether an $f$ of the above form belongs to $ C_c(\G, X_p)C_c(\G, X_q)^*$.

Towards this end, recall that $X$ is non-degenerate. Therefore $\N\O^r_X$ admits an approximate unit from $X_e$ and so $\N\O^r_X\cpr$ admits an approximate unit $\{e_i\}_{i \in \bbI}$ with $e_i \in C_c (\G, X_e)$. Then in the multiplier algebra of $\N\O^r_X\cpr$ we have,
\[
f = \lim_i x_p\big( e_i(x_q^*g)\big)=\lim_i(x_pe_i)(x_q^*g).
\]
However, it is easy to see that $x_pe_i\in C_c(\G, X_p)$. Also let $h\in C_c(\G, X_q)$ with $h(\xi) = \alpha_{\xi}(x_q)\Delta(\xi)\overline{g(\xi^{-1})}$, $ \xi \in \G$, where $\Delta$ is the modular function on $\G$ with respect to the Haar measure. Then $h^*=  x_q^*g$ and so $x_q^*g \in C_c(\G, X_q)^*$. The conclusion now follows.
\end{proof}

\begin{remark}  \label{r;inclusion}
Actually, Lemma \ref{l;inclusion} can be strengthened to show that 
\[
X_pX_q^*\cpr = (X_p\cpr) (X_p\cpr)^*, \,\, p,q \in P,
\]
and similarly
\[
X_p^*X_q\cpr = (X_p\cpr)^* (X_p\cpr), \,\, p,q \in P.
\]
\end{remark}

 \begin{proposition}
Let $(G,P)$ be an abelian, lattice ordered group and $X= \{X_p\}_{p \in P}$ a product system over $P$. Let $\alpha : \G \rightarrow \Aut \N\T_X$ be a generalized gauge action by a locally compact group $\G$. If $X$ is compactly aligned, then $X \cpr$ is also compactly aligned.
\end{proposition}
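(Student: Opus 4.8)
The plan is to produce a single faithful Nica-covariant representation of the product system $X\cpr$ that satisfies the span condition of Proposition~\ref{p;criterion}; compact alignment of $X\cpr$ then follows from the forward implication of that proposition. The representation is the one manufactured in Lemma~\ref{L:supply}. Concretely, I would fix a faithful $*$-representation $\psi\colon \cN\cO^r_X\rightarrow B(\H)$ and pass to the induced regular representation $\psi_\#\colon \cN\cO^r_X\cpr\rightarrow B(\H\otimes L^2(\G))$. By Lemma~\ref{L:supply} its restriction to $X\cpr\subseteq \cN\cO^r_X\cpr$ is Nica-covariant, and since $\psi$ is faithful the regular representation $\psi_\#$ is faithful on the reduced crossed product $\cN\cO^r_X\cpr$; in particular it is injective on $X_e\cpr$ and isometric on each fibre, so it qualifies as a faithful Nica-covariant representation in the sense demanded by Proposition~\ref{p;criterion}.

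Next I would verify the span condition. Fix $p,q\in P$, put $r:=p\vee q$ and $a:=p^{-1}r$, $b:=q^{-1}r$; since $p,q\leq r$ both $a$ and $b$ lie in $P$. Given $f\in X_p\cpr$ and $g\in X_q\cpr$, the goal is
\[
\psi_\#(f)^*\psi_\#(g)\in \overline{\spn}\{\psi_\#(z)\psi_\#(w)^*\mid z\in X_a\cpr,\ w\in X_b\cpr\}.
\]
Because $\psi_\#$ is an isometric $*$-homomorphism and $\psi_\#(f)^*\psi_\#(g)=\psi_\#(f^*g)$, it suffices to establish the $\ca$-algebraic inclusion $f^*g\in (X_a\cpr)(X_b\cpr)^*$ inside $\cN\cO^r_X\cpr$. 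Here Remark~\ref{r;inclusion} does the decisive bookkeeping: on the source side $(X_p\cpr)^*(X_q\cpr)=X_p^*X_q\cpr$, so $f^*g\in X_p^*X_q\cpr$, while on the target side $(X_a\cpr)(X_b\cpr)^*=X_aX_b^*\cpr$. Thus the whole matter collapses to the single inclusion $X_p^*X_q\cpr\subseteq X_aX_b^*\cpr$.

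To obtain this inclusion I would invoke the hypothesis on $X$. Applying the converse half of Proposition~\ref{p;criterion} to the canonical faithful Nica-covariant representation $j\colon X\rightarrow \cN\cO^r_X$ gives $x^*y\in X_aX_b^*$ for all $x\in X_p$, $y\in X_q$; taking closed linear spans yields the containment of closed subspaces $X_p^*X_q\subseteq X_aX_b^*$ in $\cN\cO^r_X$. Since $Y\mapsto Y\cpr$ is the closed span of $C_c(\G,Y)$ and is therefore monotone in $Y$, this immediately promotes to $X_p^*X_q\cpr\subseteq X_aX_b^*\cpr$, which completes the verification of the span condition and hence, through Proposition~\ref{p;criterion}, the proof.

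I expect the genuine content to sit entirely in the two inputs already established, namely Lemma~\ref{L:supply} (Nica-covariance of $\psi_\#$) and especially Remark~\ref{r;inclusion}, whose identities let me trade products of crossed-product fibres for crossed products of products on both sides of the inclusion. Granting those, the remaining ingredients --- faithfulness of the regular representation of a reduced crossed product induced by a faithful representation, and the trivial monotonicity of $\cdot\cpr$ --- are routine, and the only place I would tread carefully is keeping the order-theoretic indices $a=p^{-1}(p\vee q)$ and $b=q^{-1}(p\vee q)$ straight when applying Remark~\ref{r;inclusion} to each side.
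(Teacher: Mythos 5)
Your proof is correct and follows essentially the same route as the paper's: both fix a faithful representation $\psi$ of $\N\O_X^r$, pass to the regular representation $\psi_\#$ (whose restriction to $X\cpr$ is Nica-covariant by Lemma~\ref{L:supply}), use Lemma~\ref{l;inclusion}/Remark~\ref{r;inclusion} to identify $(X_p\cpr)^*(X_q\cpr)$ with $X_p^*X_q\cpr$ and $(X_a\cpr)(X_b\cpr)^*$ with $X_aX_b^*\cpr$, reduce to the fibrewise inclusion $X_p^*X_q\subseteq X_{p^{-1}(p\vee q)}X_{q^{-1}(p\vee q)}^*$ coming from compact alignment of $X$ (the converse half of Proposition~\ref{p;criterion}), promote it through the crossed product by monotonicity, and conclude via the forward half of Proposition~\ref{p;criterion}. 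Your write-up is in fact slightly more explicit than the paper's (which phrases the middle inclusion as a pointwise containment of $C_c$-functions), but the argument is the same.
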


\begin{proof}
Let  $\psi:\N\O_X^r\rightarrow B(\H)$ be a faithful representation and let $p,q\in P$. Then, by Lemma~\ref{l;inclusion} and the remark above we have 
\begin{align*}
\psi_{\#}(X_p\rtimes_{\alpha}&\G)^*\psi_{\#}(X_p\rtimes_{\alpha}\G) = \psi_{\#}(X_p^*X_q\cpr)  \\
&= \psi_{\#}\big( [ f \in C_c(\G, \N\O_X^r)\mid f(\xi) \in X_p^*X_q,  \forall \xi \in \G]\big) \\
&\subseteq \psi_{\#}\big( [ f \in C_c(\G, \N\O_X^r)\mid f(\xi) \in X_{p^{-1}(p\vee q)}X_{q^{-1}(p\vee q)}^* \forall \xi \in \G ]\big)\\
&=  \psi_{\#}(X_{p^{-1}(p\vee q)} \cpr)   \psi_{\#}(X_{q^{-1}(p\vee q)} \cpr)^* .
\end{align*}
Apply Proposition~\ref{p;criterion} now with $X\cpr$ in place of $X$ to deduce that $X\cpr$ is compactly aligned.
\end{proof}

In the case where $X=\{X_p\}_{p \in P}$ is product system over an abelian total order $(G, P)$, the following result was essentially proven in \cite{KR}; see \cite[Theorem 3.7]{KR} and \cite[Remark 3.8]{KR}. It was also noted there that if certain issues regarding compact alignment and Nica covariance were resolved, then the result would hold for arbitrary quasi-lattice orders. These issues have been just resolved for abelian lattice orders and so we have 

\begin{theorem} \label{HNtensor2}
Let $X=\{X_p\}_{p \in P}$ a compactly aligned product system over an abelian lattice order $(G, P)$. Let $\alpha : \G \rightarrow \Aut \N\T_X$ be a generalized gauge action by a locally compact group $\G$. Then
\[
\cN\cT^{+}_X \cpr \simeq \cN\cT^+_{X\cpr}.
\]
Therefore, we have $$\cenv\big (\cN\cT^+_{X} \cpr \big) \simeq  \N\O^r_{X\cpr}.$$
\end{theorem}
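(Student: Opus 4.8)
The plan is to prove the non-selfadjoint isomorphism $\cN\cT^+_X \cpr \simeq \cN\cT^+_{X\cpr}$ first, and then to read off the statement about $\cenv$ by applying Theorem~\ref{T:CNP-envelope} to the product system $X\cpr$. By the preceding Proposition, $X\cpr$ is a compactly aligned product system over the same abelian lattice order $(G,P)$, so that $\cN\cT^+_{X\cpr}$, its Fock representation, and Theorem~\ref{T:CNP-envelope} are all available for $X\cpr$.

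First I would construct the canonical comparison map. By Remark~\ref{R:clarify copy}(ii), $\cN\cT^+_X\cpr$ is the closed subalgebra of $\N\O^r_X\cpr$ generated by the copy of $X\cpr$ sitting inside $\N\O^r_X\cpr$. Fixing a faithful $*$-representation $\psi\colon\N\O^r_X\rightarrow B(\H)$ and passing to its regular representation $\psi_{\#}$, Lemma~\ref{L:supply} shows that $\psi_{\#}$ restricts to a Nica-covariant representation of $X\cpr$; as $\psi_{\#}$ is faithful on the reduced crossed product, these Nica-covariance relations are intrinsic to the inclusion $X\cpr\hookrightarrow\cN\cT^+_X\cpr$, which is moreover isometric since $X_e\cpr$ embeds into $\N\O^r_X\cpr$. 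The universal property of $\cN\cT^+_{X\cpr}$ then produces a completely contractive, surjective homomorphism $\Phi\colon\cN\cT^+_{X\cpr}\twoheadrightarrow\cN\cT^+_X\cpr$ carrying generators to generators.

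The substance of the argument, and the step I expect to be the main obstacle, is to verify that $\Phi$ is completely isometric. Since $\cN\cT^+_{X\cpr}$ is completely isometrically represented on its Fock module $\F_{X\cpr}=\bigoplus_{p\in P}(X_p\cpr)$, it suffices to realize this Fock representation as (a piece of) a regular representation built from the Fock representation $l\colon X\rightarrow\L(\F_X)$ of $X$ itself. Concretely, I would identify $\F_{X\cpr}$ with the Hilbert module obtained from $\F_X$ by the crossed-product construction, using Lemma~\ref{l;inclusion} and Remark~\ref{r;inclusion} to match the inner products $X_p^*X_q\cpr=(X_p\cpr)^*(X_q\cpr)$ together with the left multiplications, and then check that left multiplication by $X_p\cpr$ on $\F_{X\cpr}$ agrees with the regularised left multiplication coming from $l$. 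As the regular representation of a reduced crossed product is faithful, this identification forces the Fock representation of $X\cpr$ to factor completely isometrically through $\Phi$, so that $\Phi$ is a complete isometry. This bookkeeping with Fock modules under the crossed product --- rather than any new conceptual input --- is where the difficulty concentrates, and it is exactly the point at which the argument of \cite{KR} for total orders required the compact alignment of $X\cpr$ (the preceding Proposition) and the supply of Nica-covariant representations (Lemma~\ref{L:supply}) now at our disposal.

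Finally, granting $\cN\cT^+_X\cpr\simeq\cN\cT^+_{X\cpr}$, I apply the C*-envelope functor. Because $X\cpr$ is a compactly aligned product system over the abelian lattice order $(G,P)$, Theorem~\ref{T:CNP-envelope} yields $\cenv(\cN\cT^+_{X\cpr})\simeq\N\O^r_{X\cpr}$; combined with $\cenv(\cN\cT^+_X\cpr)\simeq\cenv(\cN\cT^+_{X\cpr})$ this gives $\cenv(\cN\cT^+_X\cpr)\simeq\N\O^r_{X\cpr}$, completing the proof.
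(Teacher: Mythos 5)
Your proposal is correct and follows essentially the same route as the paper: the paper's own proof of Theorem~\ref{HNtensor2} consists of invoking \cite[Theorem 3.7 and Remark 3.8]{KR}, now made applicable by the preceding Proposition (compact alignment of $X\cpr$) and Lemma~\ref{L:supply}, and your sketch reconstructs exactly that argument --- the canonical surjection from the universal property of $\cN\cT^+_{X\cpr}$, and complete isometry via identifying the Fock module of $X\cpr$ with the crossed product of the Fock module of $X$, using Lemma~\ref{l;inclusion} and Remark~\ref{r;inclusion}. Your deduction of $\cenv\big(\cN\cT^+_{X}\cpr\big)\simeq\N\O^r_{X\cpr}$ by applying Theorem~\ref{T:CNP-envelope} to the compactly aligned system $X\cpr$ is likewise the paper's intended reading of the ``Therefore'' in the statement.
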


We may now use the above as in our earlier works \cite{DorK, KatsIMRN, KR16} to obtain our extension of the Hao-Ng isomorphism. 

\begin{theorem} \label{HaoNgreduced}
Let $(G,P)$ be an abelian, lattice ordered group and let $X$ be a compactly aligned product system over $P$. Let $\alpha : \G \rightarrow \Aut \N\T_X$ be a generalized gauge action by a locally compact abelian group $\G$. Then
\begin{equation} \label{eq;Hao-Ng}
\N\O^r_X \cpr \simeq\N\O^r_{X\cpr}.
\end{equation}
  \end{theorem}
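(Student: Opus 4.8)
The plan is to obtain the isomorphism by feeding the non-selfadjoint Hao-Ng isomorphism of Theorem~\ref{HNtensor2} into the permanence of the $\ca$-envelope under reduced crossed products, exactly along the lines of \cite{DorK, KatsIMRN, KR16}. The point is that all the genuinely new work (the compact-alignment criterion of Proposition~\ref{p;criterion}, the permanence of compact alignment, and Theorem~\ref{HNtensor2}) is already in place, so what remains is a short structural deduction at the level of $\ca$-envelopes.

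First I would record the set-up. Since $\alpha$ is a generalized gauge action it preserves each fibre $X_p$, hence restricts to a group of completely isometric automorphisms of the Nica tensor algebra $\N\T^+_X$. By Theorem~\ref{T:CNP-envelope} we have $\N\O^r_X = \cenv(\N\T^+_X)$, and functoriality of the $\ca$-envelope promotes this restricted action to a (point-norm continuous) action of $\G$ on $\N\O^r_X$; thus $(\N\O^r_X, \G, \alpha)$ is a genuine $\ca$-dynamical system and the reduced crossed product $\N\O^r_X \cpr$ is defined. By Remark~\ref{R:clarify copy}(ii), the copy of $\N\T^+_X \cpr$ sitting inside $\N\O^r_X \cpr$ is completely isometric and generates $\N\O^r_X \cpr$ as a $\ca$-algebra, so $\N\O^r_X \cpr$ is a $\ca$-cover of $\N\T^+_X \cpr$.

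Next I would invoke the main structural result on crossed products of operator algebras from \cite{KR16}, which asserts that forming the $\ca$-envelope commutes with the reduced crossed product. Because $\G$ is abelian, hence amenable, this applies without the obstructions that arise for non-amenable groups, yielding
\[
\cenv\big(\N\T^+_X \cpr\big) \simeq \cenv\big(\N\T^+_X\big)\cpr \simeq \N\O^r_X \cpr,
\]
with the isomorphism carrying generators to generators. On the other hand Theorem~\ref{HNtensor2} identifies the same left-hand side as $\cenv(\N\T^+_X \cpr) \simeq \N\O^r_{X\cpr}$. Chaining the two identifications produces $\N\O^r_X \cpr \simeq \N\O^r_{X\cpr}$, which is exactly (\ref{eq;Hao-Ng}).

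I expect the real content to lie in the middle step, the assertion $\cenv(\N\T^+_X \cpr) \simeq \N\O^r_X \cpr$: one must upgrade the statement that $\N\O^r_X \cpr$ is a $\ca$-cover of $\N\T^+_X \cpr$ to the statement that it is the $\ca$-\emph{envelope}, i.e.\ that the associated Shilov boundary ideal is trivial. This is precisely where the reduced-crossed-product machinery of \cite{KR16} is used, and where the amenability of $\G$ enters: the coincidence of the reduced and full crossed products, together with the canonical faithful conditional expectation onto $\N\O^r_X$, is what allows one to detect the triviality of that ideal. The remaining verifications—the point-norm continuity of the extended action on $\N\O^r_X$, the generation statement, and the compatibility of the various generators-to-generators isomorphisms—are routine once Theorem~\ref{T:CNP-envelope} and Theorem~\ref{HNtensor2} are in hand.
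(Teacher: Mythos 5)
Your proposal is correct and follows essentially the same route as the paper's proof: combine Theorem~\ref{HNtensor2} with \cite[Theorem 3.23]{KR16} (the identity $\cenv\big(\A\cpr\big)\simeq \cenv(\A)\cpr$) and the $\G$-equivariant identification $\cenv(\N\T^+_X)\simeq \N\O^r_X$ supplied by Theorem~\ref{T:CNP-envelope}. One small correction to your justification: \cite[Theorem 3.23]{KR16} applies here because $\G$ is \emph{abelian}, not merely amenable---as the paper's remark following the theorem points out, the identity $\cenv\big(\A\cpr\big)\simeq \cenv(\A)\cpr$ is currently known only for discrete groups, for locally compact abelian groups, or for hyperrigid $\A$, so amenability alone would not suffice.
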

    
  \begin{proof} 
  By Theorem~\ref{HNtensor2} we have $$\cenv\big (\cN\cT^+_X \cpr \big) \simeq  \N\O^r_{X\cpr}.$$ On the other hand \cite[Theorem 3.23]{KR16} implies that $$\cenv\big (\cN\cT^+_X\cpr \big) \simeq \cenv (\cN\cT^+_X) \cpr .$$ Hence $\cenv (\cN\cT^+_X) \cpr  \simeq \N\O^r_{X\cpr}$. Now an application of Theorem~\ref{T:CNP-envelope} shows that $\cenv (\cN\cT^+_X) \simeq \N\O^r_X$ via a $\G$-equivariant map that intertwines the corresponding generalized gauge actions and the conclusion follows.
 \end{proof}
 
  \begin{remark}
The only reason why we state Theorem~\ref{HaoNgreduced} for  generalized gauge actions by abelian groups instead of arbitrary ones is that currently given an arbitrary operator algebra $\A$, the identity 
\begin{equation} \label{eq;cenv}
\cenv(\A\cpr)\simeq \cenv(\A)\cpr
\end{equation}
is known to hold only for either discrete groups \cite{KatsIMRN} or locally compact abelian groups~\cite[Theorem 3.23]{KR16}. Note however that if one assumes that $\A$ is hyperrigid, then (\ref{eq;cenv}) holds for any locally compact group~\cite[Theorem 3.6]{KR}. In subsequent work \cite{KRfuture} we show that the tensor algebra $\N\O_X^r$ of a regular product system $X$ over an abelian lattice order $(G, P)$ is actually hyperrigid and therefore the Hao-Ng isomorphism (\ref{eq;Hao-Ng}) holds for any regular product system $X$ and any locally compact group $G$.
 \end{remark} 

\section{Crossed Products by the dual action and Takai Duality}
Let $X = \{X_p\}_{p \in P}$ be a product system over a lattice ordered, abelian group $(G, P)$. 
We are now defining a product system $C_0(G, X)$ with fibers $C_0(G, X_p)$
consisting of $X_p$-valued nets, indexed by $G$ and ``vanishing at infinity". Each $C_0(G, X_p)$ becomes a $\ca$-correspondence over $C_0(G, X_e)$ by
\[
(\phi f)(s) = \phi (s) f(s), \, (f\phi )(s)= f(s)\phi(p^{-1}s),
\]
\[
\quad \langle f, g\rangle (s) =f(p s)^* g(ps), 
\]
for $s \in G$, $\phi \in C_0(G, X_e)$ and $f, g \in C_0(G, X_p)$. If $X_p$ happens to be a Hilbert bimodule, then $C_0(G, X_p)$ becomes a Hilbert bimodule as well with left inner product
\[
[ f, g](s) = f(s)g(s)^*, \,\, s \in G, f, g \in C_0(G, X_p).
\]
Note that in each fiber $C_0(G, X_p)$ we have a class of distinguished elements, the coordinate functions  $x_p\delta_s$ defined by $x_p\delta_s(t)=x_p$, if $t=s$ and $0$ otherwise. For different $s \in G$ these elements are orthogonal with respect to both inner products; put together they generate $C_0(G, X_p)$ as a Banach space.

For each pair $p, q \in P$ we define 
\[
U_{p,q}\colon C_0(G, X_p)\otimes C_0(G, X_q)\longmapsto C_0(G, X_{pq})
\]
by
\begin{equation} \label{def;mult}
U_{p, q}(x_p\delta_s\otimes x_q\delta_t)=
\begin{cases}
x_px_q\delta_{s}, &\mbox{ if } t =p^{-1}s\\
			0, &\mbox{otherwise.}
\end{cases}
\end{equation}
(Note here that $x_p\delta_s\otimes x_q\delta_t =0$ if $ t \neq p^{-1}s$.) It is easy to see that $U_{p, q}$ extends to a well-defined operator on the linear span of elementary tensors in $C_0(G, X_p)\otimes C_0(G, X_q)$.

\begin{proposition}
Let $p, q \in P$ and $X$, $U_{p,q}$ as above. Then $U_{p,q}$ preserves inner products and therefore extends to an isometric bimodule operator from $ X_p\otimes X_q$ onto $X_{pq}$. Furthermore
\begin{equation} \label{eq;prodsyst}
U_{pq, q'}\circ(U_{p, q}\otimes I_{q'})= U_{p, qq'}\circ(I_p\otimes U_{q,q'}), \, \, q'\in P.
\end{equation}
Therefore $C_0(G, X)=\{C_0(G, X_p)\}_{p \in P}$ together with the family $\{U_{p,q}\}_{p,q \in P}$ becomes a product system.
\end{proposition}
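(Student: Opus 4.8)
The plan is to reduce everything to the distinguished coordinate functions $x_p\delta_s$, which generate the fibers $C_0(G,X_p)$ as Banach spaces, and then to extend by linearity and continuity; the substantive content is a single inner-product computation that feeds off the inner-product identity already present in the original product system $X$.

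First I would prove that $U_{p,q}$ preserves inner products. Fix elementary tensors $x_p\delta_s\otimes x_q\delta_t$ and $x_p'\delta_{s'}\otimes x_q'\delta_{t'}$ and evaluate the interior-tensor-product inner product $\langle x_p\delta_s\otimes x_q\delta_t,\, x_p'\delta_{s'}\otimes x_q'\delta_{t'}\rangle=\langle x_q\delta_t,\, \langle x_p\delta_s, x_p'\delta_{s'}\rangle\, x_q'\delta_{t'}\rangle$, using the fiberwise formula $\langle f,g\rangle(u)=f(pu)^*g(pu)$ on $C_0(G,X_p)$ (and its analogue with shift $q$ on $C_0(G,X_q)$) together with the left action $(\phi g)(u)=\phi(u)g(u)$ recorded before the proposition. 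Tracking the single point at which each coordinate function is supported shows that this vanishes unless $s=s'$, $t=p^{-1}s$ and $t'=p^{-1}s'$, and that in that case it equals $\langle x_q,\langle x_p,x_p'\rangle x_q'\rangle$, viewed as the element of $C_0(G,X_e)$ supported at $(pq)^{-1}s$. On the other hand, since $U_{p,q}(x_p\delta_s\otimes x_q\delta_t)=x_px_q\delta_s$ (and $=0$ unless $t=p^{-1}s$), the $C_0(G,X_{pq})$-inner product of the two images is $\langle x_px_q, x_p'x_q'\rangle$ supported at $(pq)^{-1}s$ when $s=s'$, under the same nonvanishing constraints. The two expressions coincide because the original unitary $X_p\otimes X_q\to X_{pq}$ preserves inner products, i.e. $\langle x_px_q,x_p'x_q'\rangle=\langle x_q,\langle x_p,x_p'\rangle x_q'\rangle$. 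This computation simultaneously shows that $x_p\delta_s\otimes x_q\delta_t$ has zero norm, hence is zero, whenever $t\neq p^{-1}s$ (justifying the parenthetical remark), and that $U_{p,q}$ is inner-product preserving; it therefore descends through the Hausdorff completion and extends to an isometry.

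Next I would verify that the extended map is a bimodule map and is surjective. It suffices to test the module identities on coordinate functions: using the product-system relations $\phi_{pq}(a)(xy)=(\phi_p(a)x)y$ and its right-hand analogue, together with the fact that the right action on $C_0(G,X_q)$ shifts the coordinate by $q^{-1}$ and $q^{-1}p^{-1}s=(pq)^{-1}s$, one checks directly that $U_{p,q}\big(\phi\cdot(x_p\delta_s\otimes x_q\delta_t)\big)=\phi\cdot U_{p,q}(x_p\delta_s\otimes x_q\delta_t)$ and the corresponding identity for the right action. Surjectivity is then immediate: the range contains every $x_px_q\delta_s$, and since the original $U_{p,q}$ is onto we have $[X_pX_q]=X_{pq}$, so these span $C_0(G,X_{pq})$ densely; an isometry with dense range is onto.

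Finally, the associativity identity (\ref{eq;prodsyst}) is checked on triples $x_p\delta_s\otimes x_q\delta_t\otimes x_{q'}\delta_r$. Both composites are nonzero under the same support constraints $t=p^{-1}s$ and $r=q^{-1}t$ (equivalently $r=(pq)^{-1}s$), and on that locus $U_{pq,q'}\circ(U_{p,q}\otimes I_{q'})$ returns $(x_px_q)x_{q'}\delta_s$ while $U_{p,qq'}\circ(I_p\otimes U_{q,q'})$ returns $x_p(x_qx_{q'})\delta_s$; these agree by the associativity already built into $X$, namely $U_{p,qr}(I_{X_p}\otimes U_{q,r})=U_{pq,r}(U_{p,q}\otimes I_{X_r})$. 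With the fiber at $e$ equal to $C_0(G,X_e)$ and the actions as above, this establishes all the product-system axioms for $C_0(G,X)$. I expect the inner-product computation of the first step to be the only genuine obstacle, since it is where the three different shift parameters $p$, $q$ and $pq$ appearing in the inner-product formulas must be reconciled and where the inner-product identity of $X$ is invoked; the remaining steps are formal verifications on the coordinate functions.
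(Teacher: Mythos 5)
Your proposal is correct and takes essentially the same approach as the paper's proof: compute the inner product on elementary tensors of coordinate functions, reduce it via support-tracking to the inner-product identity of the original product system $X$, conclude that $U_{p,q}$ extends to an isometric surjection, and verify associativity (\ref{eq;prodsyst}) on elementary tensors. The additional details you spell out (the bimodule property and the dense-range argument for surjectivity, with $[X_pX_q]=X_{pq}$) are points the paper leaves implicit, not a different route.
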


\begin{proof} Indeed since
\[
\langle x_p\delta_s\otimes x_q\delta_t \mid x_{p}'\delta_{s'}\otimes x_{q}'\delta_{t'}\rangle =
\langle x_q\delta_t\mid\langle x_p\delta_s|x_p' \delta_{s'}\rangle x_q'\delta_{t'}\rangle\]
we have
\[
\langle x_p\delta_s\otimes x_q\delta_t \mid x_{p}'\delta_{s'}\otimes x_{q}'\delta_{t'}\rangle =
\begin{cases}
x_q^*x_p^*x_p'x_q', &\mbox{ if } s=s', t=t' \mbox{ and } s=pt\\
			0, &\mbox{ otherwise.}
\end{cases}
\]
The same estimate holds for $\langle U_{p,q}(x_p\delta_s\otimes x_q\delta_t ) \mid U_{p,q}(x_{p}'\delta_{s'}\otimes x_{q}'\delta_{t'})\rangle $ and so $U_{p,q}$ preserves inner products. Therefore it is bounded on the linear span of the elementary tensors of $ C_0(G, X_p)\otimes C_0(G, X_q)$ and so it extends to a (necessary isometric) surjection. 

Finally, one can similarly verify (\ref{eq;prodsyst}) on elementary tensors and the conclusion follows.
\end{proof}

On the Cuntz-Pimsner algebra of $C_0(G,X)$ we now define an action $\beta: G \rightarrow \Aut \N\O_{C_0(X,G)}^r$ as follows
\[
\beta_s \colon \N\O_{C_0(G,X)}^r\longrightarrow \N\O_{C_0(G,X)}^r; \sum_{p, t} x_p\delta_t\longmapsto \sum_{p, t} x_p\delta_{s^{-1}t}, \quad s \in G.
\]
We use now $C_0(G, X)$ to give a refinement of the Hao-Ng isomorphism in the case where the generalized gauge action on $X$ is actually coming the gauge action go $\widehat{G}$ itself. It allows us to give a picture for $\N\O^r_{X}\rtimes_{\alpha} \widehat{G}$ that does not involve the dual group $\widehat{G}$.

\begin{theorem} \label{thm;calc}
Let $X=\{X_p\}_{p \in P}$ be a product system over an abelian lattice order $(G, P)$. Then there exists a $*$-isomorphism
\[
\omega \colon \N\O^r_{X}\rtimes_{\alpha} \widehat{G} \longrightarrow  \N\O_{C_0(G,X)}^r
\]
which is equivariant with respect to the dynamical systems $(\N\O^r_{X}\rtimes_{\alpha} \Hat{G}, G, \widehat{\alpha})$ and $( \N\O_{C_0(X,G)}^r, G, \beta)$.
\end{theorem}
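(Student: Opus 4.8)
The plan is to combine the Hao--Ng isomorphism of Theorem~\ref{HaoNgreduced} with a Fourier-transform identification at the level of product systems. Specializing Theorem~\ref{HaoNgreduced} to the group $\widehat{G}$ and the gauge action $\alpha$ (which satisfies $\alpha_\chi(x)=\chi(p)x$ for $x\in X_p$, and is therefore a generalized gauge action by the abelian group $\widehat{G}$) yields a $*$-isomorphism
\[
\N\O^r_X \rtimes_{\alpha}\widehat{G}\;\simeq\;\N\O^r_{X\rtimes_{\alpha}\widehat{G}}.
\]
The crossed-product product system $X\rtimes_{\alpha}\widehat{G}$ is compactly aligned by the Proposition preceding Theorem~\ref{HNtensor2}, so the right-hand side makes sense. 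It therefore suffices to exhibit an isomorphism of product systems $X\rtimes_{\alpha}\widehat{G}\cong C_0(G,X)$: such an isomorphism induces a completely isometric isomorphism of the corresponding Nica tensor algebras, hence by Theorem~\ref{T:CNP-envelope} an isomorphism of their $\ca$-envelopes $\N\O^r_{X\rtimes_{\alpha}\widehat{G}}\simeq\N\O^r_{C_0(G,X)}$ sending generators to generators. Composing this with the Hao--Ng isomorphism produces the map $\omega$.

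To construct the product-system isomorphism I would use the Fourier transform. On the coefficient fibre the gauge action is trivial on $X_e=\A$, so $X_e\rtimes_{\alpha}\widehat{G}=\A\otimes \mathrm{C}^*(\widehat{G})$, and Pontryagin duality $\mathrm{C}^*(\widehat{G})\cong C_0(G)$ identifies this with $C_0(G,X_e)$. On a general fibre, every element of $X_p\rtimes_{\alpha}\widehat{G}$ is a limit of functions in $C_c(\widehat{G},X_p)$, and sending such a function to its Fourier transform gives an $X_p$-valued function on $G$; under this assignment the coordinate functions $x_p\delta_s\in C_0(G,X_p)$ arise as the transforms of the elementary generators of $C_c(\widehat{G},X_p)$. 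I would then verify that the resulting maps $\Phi_p\colon X_p\rtimes_{\alpha}\widehat{G}\to C_0(G,X_p)$ are $C_0(G,X_e)$-bilinear---in particular that the shift by $p^{-1}$ in the right action $(f\phi)(s)=f(s)\phi(p^{-1}s)$ is exactly the twist produced by the $p$-grading of the gauge action under the Fourier transform---that they preserve the inner product $\lip f,g\rip=f^*g$ carried by the crossed-product fibres, and that they intertwine the multiplication maps, carrying the convolution product of $X\rtimes_{\alpha}\widehat{G}$ to the pointwise-with-shift formula \eqref{def;mult}. Since the coordinate functions generate each $C_0(G,X_p)$ and their products are governed by \eqref{def;mult}, it is enough to check these identities on coordinate functions, where they reduce to the definition of the gauge action, the inner-product description recalled before Theorem~\ref{HNtensor2}, and the elementary Fourier inversion identity; here Lemma~\ref{l;inclusion} and Remark~\ref{r;inclusion} identify the relevant products of fibres.

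Finally, equivariance is a consequence of the same transform. The dual action $\widehat{\alpha}$ of $G=\widehat{\widehat{G}}$ on $\N\O^r_X\rtimes_{\alpha}\widehat{G}$ acts on $C_c(\widehat{G},\N\O^r_X)$ by $(\widehat{\alpha}_s f)(\chi)=\chi(s)f(\chi)$, i.e.\ by multiplication by characters; under the Fourier transform this becomes translation on $C_0(G)$, which is precisely the action $\beta$ given by $\beta_s(x_p\delta_t)=x_p\delta_{s^{-1}t}$, so $\omega$ intertwines $\widehat{\alpha}$ and $\beta$. I expect the main obstacle to be the fibrewise verification that $\Phi$ is an isomorphism of product systems: aligning the conventions for the gauge grading and the Fourier transform so that the characteristic shift in both the right module action and the multiplication \eqref{def;mult} of $C_0(G,X)$ emerges correctly, and confirming that $\Phi$ is isometric for the crossed-product inner product $f^*g$. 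Once $\Phi$ is established as a well-defined isomorphism of product systems, the remaining steps are formal.
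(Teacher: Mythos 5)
Your proposal is correct and follows essentially the same route as the paper: invoke the Hao--Ng isomorphism of Theorem~\ref{HaoNgreduced} for the gauge action of $\widehat{G}$, identify the product systems $X\rtimes_{\alpha}\widehat{G}$ and $C_0(G,X)$ fibrewise via the Fourier transform $\hf(s)=\int f(\zeta)\zeta(p^{-1}s)\,d\zeta$ (checking inner products and the multiplication \eqref{def;mult} on the coordinate functions, whose transforms are the characters), and read off equivariance from the fact that the dual action becomes translation, i.e.\ $\beta$, under the transform. The only cosmetic difference is that you justify the passage from the product-system isomorphism to the isomorphism of the $\N\O^r$ algebras explicitly through Theorem~\ref{T:CNP-envelope}, a step the paper leaves implicit.
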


\begin{proof}
The outline of the proof is the following. We will show that the product systems $C_0(X, G)$ and $X\rtimes_{a}\widehat{G}$ are unitarily equivalent. The unitary equivalence will induce a $*$-isomorphism $\omega$ between $ \N\O_{C_0(X,G)}^r$ and $\N\O^r_{X\rtimes_{\alpha} \Hat{G} } \simeq \N\O^r_{X}\rtimes_{\alpha} \Hat{G} $ (Hao-Ng isomorphism; Theorem~\ref{HaoNgreduced}). The last assertion of the Theorem will follow by observing that $\omega$ conjugates $\widehat{\alpha}_s$ to $\beta_s$, for all $s \in G$.

The isomorphism between $X\rtimes_{a}\widehat{G}$ and $C_0(X, G)$ is established via the Fourier transform. If $f \in X_p \rtimes_{\alpha}\widehat{G}$ then we define $\hf \in C_0(G, X_p)$ by 
\[
\hf(s)=\int f(\zeta)\zeta(p^{-1}s)d\zeta, \, s \in G. 
\]
The verification that for each $p\in P$  the Fourier transform establishes a unitary equivalence between $X_p\rtimes_{a}\widehat{G}$ and $ C_0(G, X_p)$ depends on routine calculations. For instance, given $f, g \in X_p\rtimes_{a}\widehat{G}$, we have
\begin{align*}
\langle f \mid g \rangle^{\widehat{}}(s)&=\int\langle f \mid g\rangle(\zeta)\zeta(s)d\zeta =\int (f^*g)(\zeta)\zeta(s)d\zeta\\
&=\int\int \alpha_{\xi}(f(\xi^{-1})^*)\alpha_{\xi}(g(\xi^{-1}\zeta))\zeta(s)d\xi d\zeta\\
&=\int \int f(\xi^{-1})^*g(\zeta)\xi(s)\zeta(s) d\xi d\zeta\\
&=\Big(\int f(\xi)^*\overline{\xi(s)}d\xi\Big)\Big(\int g(\zeta)\zeta(s)d\zeta \Big)\\
&=\langle\hf(ps) \mid \hg(ps)\rangle= \langle \hf \mid\hg\rangle(s), \quad s \in G,
\end{align*}
which shows that the Fourier transform preserves the right inner product.

We also need to verify that the Fourier transform preserves the multiplication between the various fibers of $ X \rtimes_{a}\widehat{G}$. We will use the fact that linear span of the evaluation functions (characters)
\[
e_s\colon \widehat{G}\longrightarrow \bbC; \widehat{G} \ni\zeta \longmapsto \zeta(s), \quad s \in G,
\]
form a dense subset of $C(\widehat{G})$ by the Peter-Weyl Theorem.\footnote{These are the Fourier transforms of the coordinate functions $\delta_s$, $s \in G$, and satisfy $\widehat{e}_s=\delta_{s^{-1}}$, $s \in G$.} In light of this, it suffices to verify that 
\begin{equation} \label{eq;multfibers}
(x_pe_s\cdot x_qe_t)^{\widehat{}}= (x_pe_s)^{\widehat{}}\otimes (x_qe_t)^{\widehat{}}, \,\, s,t \in G.
\end{equation}
Towards this end, 
\begin{align*}
(x_pe_s\cdot x_qe_t)(\zeta)&= \int e_s(\xi) \alpha_{\xi}\left(x_qe_t(\xi^{-1}\zeta)\right) d\xi\\
&=x_px_q \zeta(t)\int \xi(qst^{-1})d\xi\\
&=x_px_q\zeta(t)=x_px_qe_t(\zeta),
\end{align*}
provided that $qt^{-1}=s^{-1}$, and $0$ in all other cases. Hence by taking Fourier transform in $X_{pq}\rtimes_{\alpha}\widehat{G}$ we obtain
\begin{equation} \label{eq;cases1}
(x_pe_s\cdot x_qe_t)^{\widehat{}}=
\begin{cases}
x_px_q\delta_{pqt^{-1}}=x_px_q\delta_{ps^{-1}}, &\mbox{ if } qt^{-1}=s^{-1}\\
0, &\mbox{otherwise}
\end{cases}
\end{equation}
On the other hand,
\[
(x_pe_s)^{\widehat{}} = x_p\delta_{ps^{-1}} \mbox{ and } (x_qe_t)^{\widehat{}}=  x_q\delta_{qt^{-1}}
\]
and so by (\ref{def;mult}) we obtain
\begin{equation} \label{eq;cases2}
x_p\delta_{ps^{-1}} \otimes x_q\delta_{qt^{-1}}=
\begin{cases}
x_px_q\delta_{ps^{-1}}, &\mbox{ if } qt^{-1}=p^{-1}ps^{-1} = s^{-1}\\
&\mbox{ otherwise}
\end{cases}
\end{equation}
By comparing (\ref{eq;cases1}) and (\ref{eq;cases2}), we obtain (\ref{eq;multfibers}) and so the Fourier transform preserves the product between fibers.

Finally, if $f \in X_p\rtimes_{\alpha} \widehat{G}$ then
\begin{align*}
(\widehat{\alpha}_t(f))^{\widehat{}}(s)&=\int \widehat{\alpha}_t(f)(\zeta) \zeta(p^{-1}s)d\zeta\\
&=\int\zeta(t)f(\zeta)\zeta(p^{-1}s)d\zeta\\
&=\int f(\zeta) \zeta(p^{-1}st)d \zeta \\
&=\hf(st)=\beta_t(\hf)(s), \, s \in G
\end{align*}
and the equivariance of $\omega$ has been established.
\end{proof}

In order to get more information on $\N\O_{C_0(X,G)}^r$ we focus on a special class of product systems. We start with product systems of equivalence bimodules over an abelian lattice order $(G, P)$. 

Recall that every $t \in G$ can be written in a most efficient way as $t = \sigma( t)q^{-1}$, where $\sigma( t)= t\vee e$ and $\tau(t)=t^{-1}\sigma(t)$. By efficient we mean that for any other decomposition $t = pq^{-1}$, $p, q \in P$ there exists $r \in P$ so that  $p=\sigma( t)r$ and $q = r\tau(t)$. 

For each $t \in G$ we define $X_t:=X_{\sigma(t)}X_{\tau(t)}^*$. It is easy to see that $X_t$ becomes an $X_e$-Hilbert $\ca$-module with the usual multiplication as right action and inner product defined by $\langle x_t | y_t\rangle = x_t^*y_t$, $x_t, y_t \in X_t$. 

\begin{lemma} \label{lem;full}
Let $X=\{X_p\}_{p \in P}$ be a product system of equivalence bimodules over an abelian lattice order $(G, P)$. Then $X_{s}X_{t}^*=X_{st^{-1}}$, for any $s,t \in G$.
\end{lemma}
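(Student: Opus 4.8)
The plan is to reduce the general claim $X_s X_t^* = X_{st^{-1}}$ to the already available structure of the $X_p$ for $p \in P$, by systematically applying the efficient-decomposition bookkeeping that precedes the statement. First I would write $s = \sigma(s)\tau(s)^{-1}$ and $t = \sigma(t)\tau(t)^{-1}$ with $\sigma(s), \tau(s), \sigma(t), \tau(t) \in P$, so that by definition $X_s = X_{\sigma(s)} X_{\tau(s)}^*$ and $X_t = X_{\sigma(t)} X_{\tau(t)}^*$. Then $X_s X_t^* = X_{\sigma(s)} X_{\tau(s)}^* X_{\tau(t)} X_{\sigma(t)}^*$, and the whole problem funnels into understanding the middle factor $X_{\tau(s)}^* X_{\tau(t)}$, i.e.\ a product of the form $X_p^* X_q$ with $p, q \in P$.

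The key structural input is that $X$ is a product system of \emph{equivalence} bimodules. For equivalence bimodules we have both inner products available and the Hilbert bimodule identity $[x,y]z = x\langle y,z\rangle$; in particular each $X_p$ is full on both sides, so $X_p^* X_p = X_e$ and $X_p X_p^* = X_e$ (after identifying $X_e = \cA$). The crucial computation is therefore to show that $X_p^* X_q = X_{p^{-1}q}$ for $p, q \in P$. I would handle this by splitting according to the lattice structure: writing $r = p \vee q$, so that $p^{-1}r, q^{-1}r \in P$, one uses the factorizations $X_r = X_p X_{p^{-1}r} = X_q X_{q^{-1}r}$ coming from the product-system multiplication maps $U_{p, p^{-1}r}$ and $U_{q, q^{-1}r}$. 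Then $X_p^* X_q = X_p^* X_p X_{p^{-1}r} X_{q^{-1}r}^* X_q^* X_q = X_e X_{p^{-1}r} X_{q^{-1}r}^* X_e$, and since $p^{-1}q = (p^{-1}r)(q^{-1}r)^{-1}$ is precisely the efficient decomposition of $p^{-1}q$ (as $p^{-1}r = \sigma(p^{-1}q)$ and $q^{-1}r = \tau(p^{-1}q)$ by minimality of the join), this equals $X_{\sigma(p^{-1}q)} X_{\tau(p^{-1}q)}^* = X_{p^{-1}q}$, as desired.

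Granting the identity $X_p^* X_q = X_{p^{-1}q}$ for $p,q \in P$, the rest is reassembly. Applying it to the middle factor gives $X_{\tau(s)}^* X_{\tau(t)} = X_{\tau(s)^{-1}\tau(t)}$, and then one must recognize $X_{\sigma(s)}\,X_{\tau(s)^{-1}\tau(t)}\,X_{\sigma(t)}^*$ as $X_{st^{-1}}$ by another round of efficient-decomposition arithmetic. Here $st^{-1} = \sigma(s)\tau(s)^{-1}\tau(t)\sigma(t)^{-1}$, and one checks that after recombining the positive and negative parts the resulting group element is $st^{-1}$ with its efficient decomposition, so that the concatenated product of the $X$'s collapses to $X_{st^{-1}}$ by definition and by the product-system unitaries $U_{p,q}$.

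The main obstacle I expect is the arithmetic of the join: one must be careful that the factorizations of the join are genuinely supplied by the product-system structure and that the recombination of the efficient decompositions of $s$, $t$, and $st^{-1}$ is bookkept correctly (in particular that the cancellations $X_p^* X_p = X_e$ and $X_p X_p^* = X_e$ really do arise, which is exactly where fullness of the equivalence bimodules is used). A secondary care-point is verifying that the displayed products of closed linear spans distribute as claimed --- i.e.\ that $[M][N] = [MN]$ for the relevant subspaces --- but this is standard given non-degeneracy and the density statements already recorded in the preliminaries, so I would invoke it rather than belabor it.
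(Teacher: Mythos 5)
Your reduction and your key middle identity are sound, and they follow the same route as the paper: the claim $X_p^*X_q = X_{p^{-1}r}X_{q^{-1}r}^*$ for $r = p\vee q$, proved by sandwiching with $X_p^*X_p = X_q^*X_q = X_e$ and cancelling $X_rX_r^* = X_e$, is exactly the paper's central claim, and your observation that $(p^{-1}r,\, q^{-1}r)$ is the efficient decomposition of $p^{-1}q$ (because translation by $p^{-1}$ preserves joins) is correct. The gap is in your final reassembly. After absorbing the outer factors you arrive at $X_uX_v^*$ with $u = \sigma(s)\,\sigma\big(\tau(s)^{-1}\tau(t)\big)$ and $v = \sigma(t)\,\tau\big(\tau(s)^{-1}\tau(t)\big)$, and indeed $uv^{-1} = st^{-1}$; but your assertion that this pair "is $st^{-1}$ with its efficient decomposition" is false in general, so the conclusion $X_uX_v^* = X_{st^{-1}}$ does not hold "by definition." Concretely, take $(G,P) = (\bbZ, \bbN)$ written additively, $s = 2$, $t = 3$. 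Then $\tau(s) = \tau(t) = 0$, your middle factor is trivial, and your recombination produces the pair $(2,3)$, whereas the efficient decomposition of $st^{-1} = -1$ is $(0,1)$. So the last step of your argument, as stated, fails.

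What is missing is precisely the paper's first step: for \emph{arbitrary} $u, v \in P$ one has $X_uX_v^* = X_{uv^{-1}}$, i.e., $X_uX_v^*$ depends only on the group element $uv^{-1}$ and not on the chosen positive decomposition. This is where the efficiency property is genuinely used: writing $u = \sigma(uv^{-1})r$ and $v = r\,\tau(uv^{-1})$ for some $r \in P$, one computes
\[
X_uX_v^* = X_{\sigma(uv^{-1})}X_rX_r^*X_{\tau(uv^{-1})}^* = X_{\sigma(uv^{-1})}X_eX_{\tau(uv^{-1})}^* = X_{uv^{-1}},
\]
using left-fullness of $X_r$ and non-degeneracy (in the abelian setting one may freely reorder the fibers, so the factors can be arranged as displayed). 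Once this lemma is in place, applying it to your pair $(u,v)$ closes the argument, and your proof becomes essentially the paper's: the paper proves this decomposition-independence first for $s,t \in P$, then proves your middle claim, and then reduces the general case to the first step exactly as you would need to here.
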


\begin{proof}
Assume first that $s, t \in P$. Then there exists $r \in P$ so that  $s=\sigma(st^{-1})r$ and $t = r\tau(st^{-1})$. Hence, $$X_{s}X_{t}^*=X_{\sigma(st)}X_{r}X_{r}^*X_{\tau(st^{-1})}^*=X_{\sigma(st^{-1})}X_eX_{\tau(st^{-1})}^*= X_{st^{-1}}$$ since $X_r$ is full as a left module.

In order to establish the general case we first claim that for any $p, q \in P$ we have
\[
X_p^*X_q=X_{p^{-1}(p \vee q )}X_{q^{-1}(p \vee q )}^*.
\]
Indeed, since both $X_p$ and $X_q$ are full as right modules we have
\begin{align*}
X_{p^{-1}(p \vee q )}X_{q^{-1}(p \vee q )}^*&= X_p^*X_pX_{p^{-1}(p \vee q )}X_{q^{-1}(p \vee q )}^*X_q^*X_q\\
&=X_p^*X_{p \vee q }X_{p \vee q }^*X_q\\
&=X_p^*X_eX_q=X_p^*X_q,
\end{align*}
as desired.

Finally, consider $s,t \in G$ and let $p, p_1, q, q_1 \in P$ so that $s=pq^{-1}$ and $t =p_1q_1^{-1}$. Then by the previous paragraphs we have
\begin{align*}
X_sX_t^*&=X_pX_q^*X_{p_1}X_{q_1}^*\\
&=X_pX_{q^{-1}(q\vee p_1)}X_{p_1^{-1}(q \vee p_1)}^*X^*_{q_1}\\
&=X_{pq^{-1}(q\vee p_1)}X^*_{q_1p_1^{-1}(q \vee p_1)}.
\end{align*}
However, $pq^{-1}(q\vee p_1), q_1p_1^{-1}(q \vee p_1) \in P$ and so the first paragraph of the proof implies
\[
X_sX_t^*=X_{pq^{-1}(q_1p_1^{-1})^{-1}}=X_{st^{-1}}
\]
as desired.
\end{proof}

We denote with $\E_X$ the right $X_e$-Hilbert $\ca$-module defined by $\E_X=\oplus_{t \in G} X_t$. The $\ca$-module $\E_X$ and the compact operators acting on it play an important role in the proof of our Takai duality.  

Given $s, t \in G$ and $x_s \in X_s$ we define an operator $T_{x_s, t} \in \L(\E_X)$ by 
\[
T_{x_s, t}(x_re_r)=
\begin{cases}x_sx_re_t,& \mbox{ if }r=s^{-1}t \\
0, &\mbox{ otherwise,}
\end{cases}
\]
where $x_re_r$ denotes the element of $\E_X$ with $x_r\in X_r$ at the $r$-position and $0$ elsewhere. It is easy to see that $T_{x_s, t}$ is adjointable with adjoint $T_{x_s, t}^*=T_{x_s^*, s^{-1}t}$. Furthermore

\begin{lemma}
Let $X=\{X_p\}_{p \in P}$ be a product system of equivalence bimodules over an abelian lattice order $(G, P)$. Then the collection 
\[
\S_X=\{T_{x_p, t} \mid x_p \in X_p, p \in P, t \in G\} \subseteq \L(\E_X)
\]
generates $\K(\E_X)$ as a $\ca$-algebra.
\end{lemma}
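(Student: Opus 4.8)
The plan is to establish the two inclusions $\ca(\S_X)\subseteq\K(\E_X)$ and $\K(\E_X)\subseteq\ca(\S_X)$ separately. Before either, I would record two elementary algebraic identities for the generators, both obtained by chasing the definitions on the coordinate vectors $x_re_r$: the adjoint formula $T_{x_s,t}^*=T_{x_s^*,\,s^{-1}t}$ (already noted above) and the multiplication formula
\[
T_{x_s,t}\,T_{y_a,b}=
\begin{cases}
T_{x_sy_a,\,t} & \text{if } b=s^{-1}t\\
0 & \text{otherwise.}
\end{cases}
\]
Here the product $x_sy_a\in X_{sa}$ makes sense because $X_a=X_{a^{-1}}^*$ and Lemma~\ref{lem;full} give $X_sX_a=X_{sa}$. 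These two identities reduce every manipulation of $\S_X$ to bookkeeping of group elements.

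For $\ca(\S_X)\subseteq\K(\E_X)$ it suffices to show $T_{x_p,t}\in\K(\E_X)$ for each generator. The operator $T_{x_p,t}$ is carried entirely by the single summand $X_{p^{-1}t}$, on which it acts as the left multiplication $z\mapsto x_pz$ into $X_t$. Now Lemma~\ref{lem;full} gives $X_tX_{p^{-1}t}^*=X_{t(p^{-1}t)^{-1}}=X_p$, so I may approximate $x_p$ in norm by finite sums $\sum_iu_iv_i^*$ with $u_i\in X_t$ and $v_i\in X_{p^{-1}t}$. Each $u_iv_i^*$ acts on $X_{p^{-1}t}$ as the module map $z\mapsto u_i\langle v_i,z\rangle$, which is exactly the restriction to $X_{p^{-1}t}$ of the rank-one operator $\theta_{u_ie_t,\,v_ie_{p^{-1}t}}\in\K(\E_X)$. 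Passing to the limit yields $T_{x_p,t}\in\K(\E_X)$, and since $\K(\E_X)$ is a C*-algebra the inclusion follows.

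For the reverse inclusion, recall that $\K(\E_X)$ is generated by the $\theta_{\xi,\eta}$; by bilinearity and continuity it is enough to treat $\xi=x_se_s$, $\eta=x_re_r$ with $s,r\in G$. A direct coordinate check gives the factorization
\[
\theta_{x_se_s,\,x_re_r}=T_{x_s,s}\,T_{x_r,r}^{*},
\]
so I am reduced to placing each $T_{x_s,s}$, with $s\in G$ \emph{arbitrary}, inside $\ca(\S_X)$, even though the generators only carry a subscript $p\in P$. This is where the efficient decomposition $s=\sigma(s)\tau(s)^{-1}$ and $X_s=X_{\sigma(s)}X_{\tau(s)}^*$ enter: writing $x_s$ as a norm-limit of sums $ab^*$ with $a\in X_{\sigma(s)}$, $b\in X_{\tau(s)}$ and applying the two identities above (using $\sigma(s)^{-1}s=\tau(s)^{-1}$) gives
\[
T_{ab^*,\,s}=T_{a,s}\,T_{b,e}^{*},
\]
both factors now genuine elements of $\S_X$ since $\sigma(s),\tau(s)\in P$. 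Hence $T_{x_s,s}\in\ca(\S_X)$ for every $s\in G$, so every $\theta_{x_se_s,x_re_r}\in\ca(\S_X)$ and $\K(\E_X)\subseteq\ca(\S_X)$. I expect the genuine content — and the only place the equivalence-bimodule hypothesis is used — to be these appeals to Lemma~\ref{lem;full}: the fullness identities $X_tX_{p^{-1}t}^*=X_p$ and $X_s=X_{\sigma(s)}X_{\tau(s)}^*$ are precisely what make the $T$'s compact and what lets the $\theta$'s be factored through $P$-indexed generators; everything else is the coordinate bookkeeping encoded in the two displayed identities.
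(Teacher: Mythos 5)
Your proposal is correct and is essentially the paper's own argument: compactness of the generators comes from the same identity $T_{uv^*,t}=\theta_{ue_t,\,ve_{p^{-1}t}}$ together with Lemma~\ref{lem;full}, and the reverse inclusion rests on the same product formulas. Indeed, composing your factorization $\theta_{x_se_s,x_re_r}=T_{x_s,s}T_{x_r,r}^*$ yields the paper's identity $\theta_{x_se_s,x_re_r}=T_{x_sx_r^*,s}$, and your identity $T_{ab^*,s}=T_{a,s}T_{b,e}^*$ is the paper's $T_{x_p,t}T_{x_q^*,p^{-1}t}=T_{x_px_q^*,t}$ specialized to $t=s$, $p=\sigma(s)$, $q=\tau(s)$, so the two proofs differ only in the order of the bookkeeping.
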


\begin{proof}
First notice that 
\begin{equation} \label{eq;rankone}
\theta_{x_te_t, x_{s^{-1}t}e_{s^{-1}t}} =T_{x_tx^*_{s^{-1}t}, t},
\end{equation}
where $x_t \in X_t$, $x_{s^{-1}t} \in X_{s^{-1}t}$, $s, t \in G$. By Lemma~\ref{lem;full}, $X_t X_{s^{-1}t}^*=X_s$ and so (\ref{eq;rankone}) implies that all operators $T_{x_s, t}$, $x_s \in X_s$, $s, t \in G$, are compact. In particular, $\ca(\S_X) \subseteq \K(\E_X)$.

Furthermore
 \[
T_{x_p, t}(T_{x_q, qp^{-1}t})^*=T_{x_p, t}T_{x_q^*, p^{-1}t} = T_{x_px_q^*, t}
\]
and so $\ca(\S_X)$ contains all operators of the form $ T_{x_s, t}$, $x_s \in X_s$, $s,t \in G$. However, given any $x_s \in X_s$ and $x_t \in X_t$, $s, t \in G$, we have
\[
\theta_{x_se_s, x_te_t} = T_{x_sx_t^*, s}
\]
and so $\ca(\S_X)$ contains all rank-one operators. This suffices to prove the lemma.
\end{proof}

We have arrived to our first version of Takai duality that holds for Cuntz-Pimsner algebras  of product system of equivalence bimodules over an abelian lattice order. What the lattice order is just $(\bbZ, \bbN)$, this result was obtained by Abadie \cite{Ab}. Note that in Corollary~\ref{cor;TakaiDuality} we obtain a much more general result.

\begin{theorem}[Takai duality, first version] \label{mainTakai}
Let $X=\{X_p\}_{p \in P}$ be a product system of equivalence bimodules over an abelian lattice order $(G, P)$. If $\alpha$ is the dual action of $\widehat{G}$ on $\N\O_X^r $, then $\N\O_X^r \rtimes \widehat{G}$ is Morita equivalent to $X_e$.
\end{theorem}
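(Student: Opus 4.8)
The plan is to read off the result from Theorem~\ref{thm;calc} and an explicit identification of $\N\O^r_{C_0(G,X)}$ as an algebra of generalized compact operators. By Theorem~\ref{thm;calc} we have a $*$-isomorphism $\N\O^r_X\rtimes\widehat{G}\simeq\N\O^r_{C_0(G,X)}$, so it suffices to prove that $\N\O^r_{C_0(G,X)}\simeq\K(\E_X)$ and that $\K(\E_X)$ is Morita equivalent to $X_e$. The latter is the easy end of the argument. Writing $X_t=X_{\sigma(t)}X_{\tau(t)}^*$, so that $X_t^*=X_{\tau(t)}X_{\sigma(t)}^*$, Lemma~\ref{lem;full} and the equivalence-bimodule relations give
\[
\langle X_t, X_t\rangle = X_t^*X_t = X_{\tau(t)}\big(X_{\sigma(t)}^*X_{\sigma(t)}\big)X_{\tau(t)}^* = X_{\tau(t)}X_{\tau(t)}^* = X_e,
\]
so each fibre, and hence $\E_X=\oplus_{t\in G}X_t$, is full as a right $X_e$-module. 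Thus $\E_X$ is a $\K(\E_X)$--$X_e$ imprimitivity bimodule and $\K(\E_X)$ is Morita equivalent to $X_e$.

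The heart of the matter is the isomorphism $\N\O^r_{C_0(G,X)}\simeq\K(\E_X)$. First I would build a representation of the product system $C_0(G,X)$ on $\E_X$ by sending coordinate functions to the operators of the preceding lemma,
\[
\psi_p\colon C_0(G,X_p)\longrightarrow\L(\E_X),\qquad \psi_p(x_p\delta_s)=T_{x_p,s}.
\]
A direct computation on coordinate functions, using $T_{x_p,s}T_{x_q,t}=T_{x_px_q,s}$ (nonzero precisely when $t=p^{-1}s$) and $T_{x_p,s}^*=T_{x_p^*,p^{-1}s}$, shows that $\psi$ preserves the fibre products and the right inner products of $C_0(G,X)$, so each $\psi_p$ extends isometrically to $C_0(G,X_p)$ and $\psi$ is an isometric representation with $\psi_e$ a $*$-homomorphism. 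Faithfulness of $\psi_e$ is immediate, since $\psi_e(a\delta_u)=T_{a,u}$ acts on the fibre $X_u$ by the left action of $a\in X_e$, which is injective because $X_u$ is an equivalence bimodule. For Nica-covariance I would use fullness: by Lemma~\ref{lem;full}, $T_{x_p,s}$ maps $X_{p^{-1}s}$ onto $X_pX_{p^{-1}s}=X_s$, so $[\psi_p(C_0(G,X_p))\E_X]=\E_X$ and the strict extension satisfies $\rho_p(I)=I$ for every $p$; hence $\rho_p(I)\rho_q(I)=\rho_{p\vee q}(I)$ holds trivially and $\psi$ is Nica-covariant. Conjugation by the diagonal unitaries $U_\chi(x_te_t)=\chi(t)x_te_t$, $\chi\in\widehat{G}$, satisfies $U_\chi T_{x_p,s}U_\chi^*=\chi(p)T_{x_p,s}$, so the resulting gauge action $\gamma=\mathrm{Ad}\,U_\chi$ makes $\psi$ gauge-compatible. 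Since the preceding lemma shows the compact operators $T_{x_p,s}$ generate $\K(\E_X)$, we get $\ca(\psi(C_0(G,X)))=\K(\E_X)$, and the co-universal property of $\N\O^r_{C_0(G,X)}$ then produces a surjective $*$-homomorphism $q\colon\K(\E_X)\twoheadrightarrow\N\O^r_{C_0(G,X)}$ carrying $\psi_p(\xi)$ to the canonical generator $j_p(\xi)$.

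The step I expect to be the main obstacle is the injectivity of $q$, which I would establish by a gauge-invariant uniqueness argument. Because $G$ is discrete, $\widehat{G}$ is compact, so averaging over $\widehat{G}$ yields faithful conditional expectations of $\K(\E_X)$ and of $\N\O^r_{C_0(G,X)}$ onto their fixed-point algebras, and $q$ intertwines these by equivariance. It therefore suffices to prove that $q$ is injective on the fixed-point algebra $\K(\E_X)^{\gamma}=\overline{\bigoplus_{t\in G}\K(X_t)}$ of block-diagonal compacts. This is precisely where faithfulness of $\psi_e$ is used: the fixed-point algebra is the closed span of the degree-zero products $\psi_p(f)\psi_p(g)^*$, which $q$ sends to the corresponding generators of the core of $\N\O^r_{C_0(G,X)}$, and an inductive/approximation argument along the finite-rank operators shows that no nonzero positive element of $\K(\E_X)^{\gamma}$ can lie in $\ker q$. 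I regard this core injectivity as the genuine technical crux of the proof.

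Granting the injectivity of $q$, we conclude $\N\O^r_{C_0(G,X)}\simeq\K(\E_X)$. Combining this with the isomorphism $\N\O^r_X\rtimes\widehat{G}\simeq\N\O^r_{C_0(G,X)}$ from Theorem~\ref{thm;calc} and the Morita equivalence $\K(\E_X)\sim X_e$ established in the first paragraph, we obtain that $\N\O^r_X\rtimes\widehat{G}$ is Morita equivalent to $X_e$, as claimed.
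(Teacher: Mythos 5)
Your proposal follows the same overall strategy as the paper: reduce via Theorem~\ref{thm;calc} to identifying $\N\O^r_{C_0(G,X)}$ with $\K(\E_X)$, implement this with the representation $\psi_p(x_p\delta_s)=T_{x_p,s}$, invoke the lemma that the operators $T_{x_p,s}$ generate $\K(\E_X)$, and finish with the Morita equivalence between $\K(\E_X)$ and $X_e$ coming from fullness of $\E_X$. Your fullness computation, the Nica-covariance check via $\rho_p(I)=I$, and the gauge action implemented by the unitaries $U_\chi$ are all correct.

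The genuine gap sits exactly at the step you flag as the crux, and it is not a routine technicality that can be waved at: injectivity of $q$ on the core cannot follow from the properties you verified. Nica covariance, faithfulness of $\psi_e$, and gauge compatibility only buy the surjection $q$; they can never force injectivity, because the Fock representation of any compactly aligned product system has all three properties, yet the resulting co-universal surjection has nonzero kernel in general (for the trivial system $X_p=\bbC$ over $(\bbZ,\bbN)$ it is the quotient of the Toeplitz algebra onto $C(\bbT)$, with kernel the compacts). So your final ``inductive/approximation argument'' must use structure specific to $\K(\E_X)$, namely a Cuntz--Pimsner-type covariance, and your sketch never supplies it. The missing observation is in fact short: since the fibres are equivalence bimodules, Lemma~\ref{lem;full} gives $X_sX_t^*=X_{st^{-1}}$, and products of the operators $T$ collapse to single operators $T$; for instance $\psi_p(x_p\delta_s)\psi_p(y_p\delta_s)^*=T_{x_p,s}T_{y_p^*,p^{-1}s}=T_{x_py_p^*,s}$ with $x_py_p^*\in X_pX_p^*=X_e$. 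Consequently every degree-zero word in the generators lies in $\psi_e(C_0(G,X_e))$, so $\K(\E_X)^\gamma=\psi_e(C_0(G,X_e))$ exactly, and on this subalgebra $q$ coincides with $j_e\circ\psi_e^{-1}$, which is injective because $j_e$ is faithful; your averaging reduction then closes the proof. The paper reaches the same conclusion without any hand-made uniqueness argument: it checks in addition that each $\psi_p$ preserves the \emph{left} inner products, i.e.\ is a Hilbert bimodule (Katsura) representation, hence Cuntz--Pimsner covariant in Fowler's sense, and then quotes \cite[Corollary 5.2]{SY11} together with the gauge-invariant uniqueness theorem \cite[Corollary 4.8]{CLSV11} to conclude that $\ca(\psi(C_0(G,X)))=\K(\E_X)$ is a faithful copy of $\N\O^r_{C_0(G,X)}$. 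Either route works, but as written your argument omits precisely the ingredient that distinguishes $\K(\E_X)$ from a Toeplitz-type algebra.
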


\begin{proof}
For each $p \in P$ consider the map
\[
\psi_p \colon C_0(G, X_p)\longrightarrow \K(\E_X); \sum_{s \in G} x_{p_s} \delta_s \longmapsto \sum_{s \in G} T_{x_{p_s}, s}.
\]
Routine calculations show that $\psi_p$, $p \in P$, is a faithful Hilbert bimodule representation. For instance, given $x_p, y_p \in X_p$ and $s,t \in G$, we have
\begin{equation} \label{rightside}
\psi_e([x_p\delta_s | y_p\delta_t]) = 
\begin{cases}
\psi_e(x_py_p^*\delta_s) = T_{x_py_p^*,e}, &\mbox{ if } s=t \\
0, &\mbox{ otherwise.}
\end{cases}
\end{equation}
On the other hand, 
\[
[\psi_p(x_p\delta_s) | \psi_p(y_p\delta_t)] = T_{x_p, s}T_{y_p, t}^*=T_{x_p, s}T_{y_p^*, p^{-1}t}
\]
which equals the right side of (\ref{rightside}) and so $\psi_p$ preserves the left inner product. Similar calculations establish the other properties and also show that the family $\{\psi_p\}_{p \in P}$ preserves the product system structure.

Since each $\psi_p$ is a Hilbert bimodule (hence a Katsura) representation of $C_0(G, X_p)$, we have that $\psi$ is Cuntz-Pimsner covariant in the sense of Fowler \cite[Definition 2.5]{Fow02}. Corollary 5.2 in \cite{SY11} shows that $\psi$ is a Cuntz-Pimsner covariant representation \cite[Definition 3.9]{SY11}  and clearly gauge compatible. Hence by \cite[Corollary 4.8]{CLSV11}, it induces a faithful representation of $\N\O_X^r$ and so $\N\O_X^r\simeq \K(\E_X)$. The conclusion now follows.
\end{proof}

\begin{lemma} \label{dilationexst}
Let $X=\{X_p\}_{p \in P}$ be a product system of regular and full $\ca$-correspondences over an abelian lattice order $(G, P)$. Then there exists a product system $Y=\{Y_p\}_{p \in P}$ of equivalence bimodules so that $X_p \subseteq Y_p$, for all $ p \in P$, and satisfying $\N\O^r_X\simeq\N\O^r_Y$. 
\end{lemma}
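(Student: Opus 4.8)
The plan is to realise the dilated product system $Y$ as the collection of spectral subspaces of the canonical $G$-grading of $\N\O_X^r$, and then to recognise $\N\O_X^r$ itself as the Cuntz--Pimsner algebra of $Y$. Being co-universal for gauge-compatible representations, $\N\O_X^r$ carries the gauge coaction of $G$ (equivalently, the spectral subspaces of the gauge action of $\widehat{G}$); since $(G,P)$ is a lattice order this produces a topological $G$-grading, i.e. a Fell bundle over $G$ with unit fibre $B:=(\N\O_X^r)_e$ the core, in which each $X_p$ sits in the degree-$p$ part. For $p\in P$ I set $Y_p:=(\N\O_X^r)_p$ and give it the Hilbert $B$-bimodule structure inherited from $\N\O_X^r$, namely $\langle a,b\rangle=a^*b$, $[a,b]=ab^*$, with $B$ acting by multiplication; one checks $[a,b]c=a\langle b,c\rangle$, so this is a Hilbert bimodule in the sense of the preliminaries. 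Since $j_e$ is faithful, $X_p\hookrightarrow Y_p$ isometrically, so $X_p\subseteq Y_p$ and $A=X_e\subseteq B=Y_e$.

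Next I would compute the fibres explicitly. Inside $\N\O_X^r$ one has $\overline{X_pX_p^*}=j^{(p)}(\K(X_p))$, and compact alignment (through $\K(X_p)\otimes I\subseteq\K(X_{pq})$) gives $\overline{X_pX_p^*}\subseteq\overline{X_{p'}X_{p'}^*}$ whenever $p\le p'$; as $P$ is directed this exhibits $B=\overline{\bigcup_{p\in P}\overline{X_pX_p^*}}$ as an increasing union. Regularity enters here: since each $\phi_p$ is injective with range in $\K(X_p)$, Katsura's ideal is all of $X_e$, so Cuntz--Pimsner covariance forces $A\subseteq\overline{X_pX_p^*}$ for every $p$ and the core genuinely contains the coefficients, making the dilation nondegenerate. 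Counting degrees, the spectral subspace is $Y_p=\overline{\Span}\{\,X_{pq}X_q^*:q\in P\,\}$.

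The heart of the matter is to show that each $Y_p$ is full on both sides over $B$, hence an equivalence bimodule, and that the grading restricted to $P$ is saturated, so that $\{Y_p\}_{p\in P}$ is a product system. For right fullness I would expand $(X_{pq}X_q^*)^*(X_{pq}X_q^*)=X_qX_{pq}^*X_{pq}X_q^*$ and use $\overline{X_{pq}^*X_{pq}}=A$ to get $\overline{X_qX_q^*}\subseteq\overline{Y_p^*Y_p}$ for all $q$, whence $\overline{Y_p^*Y_p}=B$. Left fullness is the delicate direction, the $X_p$ being full only on the right: $(X_{pq}X_q^*)(X_{pq}X_q^*)^*=X_{pq}X_q^*X_qX_{pq}^*$ collapses, again by $\overline{X_q^*X_q}=A$, to $\overline{X_{pq}X_{pq}^*}$, and the cofinality of $pP$ in $P$ yields $\overline{\bigcup_q\overline{X_{pq}X_{pq}^*}}=B$, so $\overline{Y_pY_p^*}=B$. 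The same bookkeeping gives $\overline{Y_pY_q}=Y_{pq}$ and shows that multiplication induces a unitary $Y_p\otimes_B Y_q\to Y_{pq}$, so $\{Y_p\}_{p\in P}$ is a product system of equivalence bimodules dilating $X$.

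Finally I would match the two Cuntz--Pimsner algebras. The inclusion $\iota\colon Y\hookrightarrow\N\O_X^r$ is a gauge-compatible, Nica-covariant, Cuntz--Pimsner-covariant representation with $\iota_e=\id_B$ faithful and $\ca(\iota(Y))=\N\O_X^r$ (as $X\subseteq Y$ generates); the co-universal property then produces a surjection $\N\O_X^r\to\N\O_Y^r$ sending generators to generators, while Cuntz--Pimsner covariance of $\iota$ (available because the $Y_p$ are equivalence bimodules over a saturated grading) produces an inverse $\N\O_Y^r\to\N\O_X^r$, so $\N\O_X^r\simeq\N\O_Y^r$ equivariantly; equivalently, $\N\O_X^r$ with its grading is exactly the reduced cross-sectional algebra of the Fell bundle $\{Y_p\}$. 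I expect the main obstacle to be the third paragraph: establishing \emph{left} fullness of the dilated fibres and saturation of the $G$-grading, since neither is supplied by the hypotheses and both must be extracted from right fullness and regularity via the directedness of $P$; a secondary subtlety is verifying that $\iota$ is genuinely Cuntz--Pimsner covariant so that the co-universal map admits an inverse.
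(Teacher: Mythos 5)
Your proposal is correct and takes essentially the same route as the paper: your spectral subspaces $Y_p=\overline{\spn}\{X_{pq}X_q^*\mid q\in P\}$ are exactly the paper's fibers $Y_p=[X_pY_e]$ with $Y_e=\ca(\{X_sX_s^*\mid s\in P\})$, the increasing-union description of the core and the left-fullness argument both rest, as in the paper, on regularity forcing $X_e\subseteq \overline{X_pX_p^*}$, and the identification $\N\O_X^r\simeq\N\O_Y^r$ is obtained in the same way, from the identity representation of $Y$ being a gauge-compatible, fiberwise Hilbert-bimodule (hence Cuntz--Pimsner covariant) representation with faithful coefficient map. One minor correction: the inclusion $\K(X_p)\otimes I\subseteq\K(X_{pq})$ that you cite is a consequence of regularity (the left actions being by compacts), not of compact alignment, but this is harmless since you invoke regularity in the very next step.
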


\begin{proof}
Consider the copy of $X$ contained in $\N\O^r_X$ and let 
\[
Y_e= \ca( \{ X_sX_s^*\mid s\in P\}) \mbox{ and } Y_p = [  X_{p}Y_e, p \in P\backslash \{e\}].
\]
It is easily seen that each $Y_p$ equipped with the right $Y_e$-valued inner product 
\[
\langle\cdot, \cdot \rangle \colon Y_t\times Y_t\longrightarrow Y_e; (x'_p , x_p)\longmapsto \langle x'_p, x_p \rangle:= (x'_p)^*x_p
\]and the left $Y_e$-valued inner product 
\[
[\cdot, \cdot ] \colon Y_p\times Y_p\longrightarrow Y_e; (x'_p, x_p)\longmapsto [ x'_p, x_p]:= x'_p x_p^*
\]
becomes a Hilbert bimodule over $Y_e$. (The $\ca$-algebra $Y_e$ is usually referred to as the \textit{core} of  $\N\O^r_X$.)

For the proof of the lemma we need to verify first the following claim: 
\[
Y_e = [\cup_{p_0 \leq p} X_pX_p^*] ,
\]
for any $p_0 \in P$. Indeed, since $X_e$ acts faithfully by compacts, it coincides with the Katsura ideal for the correspondence $(X_{p_0}, X_e)$ and so $X_e \subseteq X_{p_0}X_{p_0}^*$. Therefore, for any $p \in P$ we have
\[
X_pX_p^*=X_pX_eX_p^* \subseteq X_p (X_{p_0}X_{p_0}^*)X_p^*\subseteq X_{pp_0}X_{pp_0}^*,
\]
and the claim follows.

Now for the proof of the lemma notice that for any $p_0\in P$ we have
\begin{align*}
Y_{p_0}Y_{p_0}^*&=X_{p_0}Y_eY_e^*X_{p_0}^*\\
&=[\cup_{p \in P} X_{p_0}X_pX_p^*X_{p_0}^*]\\
&=[\cup_{p_0\leq p } X_pX_p^*]\\
&=[\cup_{p\in P} X_p X_p^*]=Y_e,
\end{align*}
and so each $(Y_{p_0}, Y_e)$ is left-full. It is easily seen that each $(Y_{p_0}, Y_e)$ is right-full and so it is an equivalence bimodule.

We also need to verify that for any $p, q \in P$, we have $Y_pY_q=Y_{pq}$, as expected from $Y$ if it is to form a product system. Since $X_q$ is essential,
\[
Y_{pq} =X_{pq}Y_e =(X_pX_e)(X_qY_e) \subseteq Y_pY_q.
\]
Conversely, let $r,s \in P$. Then
\begin{align*}
X_p(X_rX_r^*)X_q(X_sX_s^*)&= X_{pr}(X_r^*X_{qs})X_s^*\\
                        &\subseteq X_{pr}(X_{r^{-1}(r \vee qs)}X^*_{(qs)^{-1}(r \vee qs) })X_s^* \\
			&= X_pX_{r\vee qs}X^*_{q^{-1}(r \vee qs)}\\
			&= X_pX_qX_{q^{-1}(r\vee qs)} X^*_{q^{-1}(r \vee qs)} \subseteq X_{pq}Y_e, 
			\end{align*}
where the first set theoretic inclusion above follows from Proposition~\ref{p;criterion}. This implies $ Y_pY_q.\subseteq Y_{pq}$ and so $Y$ forms a product system.

The fact that $\N\O^r_X\simeq\N\O^r_Y$ follows from considerations similar to that in the last paragraph of Theorem~\ref{mainTakai}. Indeed it is enough to show that the identity representation of $Y$ inside $\N\O^r_X$ is covariant\footnote{in the sense of Katsura} on each fiber and admits a gauge action. The covariance follows easily from the fact that the identity representation on each fiber preserves both inner products. The required gauge action is the one coming from the universality of $\N\O^r_X$.
\end{proof}

In \cite[Remark 5.4]{Sch} Schafhauser notes that in the case where $X$ is a regular $\ca$-correspondence over an approximately finite $\ca$-algebra then ``at least in principle" one could determine $\O_X\rtimes_{\alpha} \bbT$ up to Morita equivalence. (This is because in that case $\O_X\rtimes_{\alpha} \bbT$ is AF and his Theorem A calculates the K-theory of $\O_X\rtimes_{\alpha} \bbT$.) We now see that one can determine the crossed product of $\N\O_X^r \rtimes_{\alpha} \widehat{G}$ up to Morita equivalence in a much broader context and in a very precise manner. 

\begin{corollary}[Takai duality, second version] \label{cor;TakaiDuality}
Let $X=\{X_p\}_{p \in P}$ be a product system of regular and full $\ca$-correspondences over an abelian lattice order $(G, P)$. If $\alpha$ is the gauge action of $\widehat{G}$ on $\N\O_X^r $, then $\N\O_X^r \rtimes_{\alpha} \widehat{G}$ is Morita equivalent to the core $Y_e$ of $\N\O_X^r$.
\end{corollary}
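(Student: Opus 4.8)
The plan is to bootstrap from the equivalence-bimodule case already settled in Theorem~\ref{mainTakai}, using the dilation provided by Lemma~\ref{dilationexst}. First I would apply Lemma~\ref{dilationexst} to produce a product system $Y=\{Y_p\}_{p\in P}$ of equivalence bimodules with $X_p\subseteq Y_p$ for all $p\in P$, together with a $\ca$-isomorphism $\Phi\colon\N\O_Y^r\to\N\O_X^r$, and with $Y_e$ equal to the core of $\N\O_X^r$. The one point that genuinely requires checking is that $\Phi$ can be taken to intertwine the gauge actions of $\widehat{G}$, so that it descends to an isomorphism of the corresponding crossed products.

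For the equivariance, recall that the isomorphism $\Phi$ of Lemma~\ref{dilationexst} is induced by the identity representation of $Y$ sitting inside $\N\O_X^r$; in particular it carries each fibre $Y_p$ into the degree-$p$ component of $\N\O_X^r$. Writing $\alpha$ for the gauge action on $\N\O_X^r$ and $\alpha^Y$ for the gauge action on $\N\O_Y^r$, both act on a degree-$p$ generator by the same scalar determined by $p$ and the character $\chi\in\widehat{G}$. Hence $\Phi\circ\alpha^Y_\chi=\alpha_\chi\circ\Phi$ holds on the generators of $\N\O_Y^r$ and therefore on all of $\N\O_Y^r$, so that $\Phi$ is $\widehat{G}$-equivariant. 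Passing to crossed products yields a $\ca$-isomorphism
\[
\N\O_X^r\rtimes_{\alpha}\widehat{G}\;\simeq\;\N\O_Y^r\rtimes_{\alpha^Y}\widehat{G}.
\]

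Finally I would apply Theorem~\ref{mainTakai} to the product system $Y$ of equivalence bimodules: this gives that $\N\O_Y^r\rtimes_{\alpha^Y}\widehat{G}$ is Morita equivalent to $Y_e$. Combining this with the isomorphism above, $\N\O_X^r\rtimes_{\alpha}\widehat{G}$ is Morita equivalent to $Y_e$, the core of $\N\O_X^r$, as asserted. The decisive inputs are Lemma~\ref{dilationexst} and Theorem~\ref{mainTakai}; the only step that is not purely formal is the verification that $\Phi$ respects the dynamics, and since $\Phi$ preserves the $P$-grading this reduces to matching scalars on generators and presents no real difficulty.
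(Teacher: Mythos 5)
Your proposal is correct and follows essentially the same route as the paper: invoke Lemma~\ref{dilationexst} to replace $X$ by the equivalence-bimodule system $Y$, note that the isomorphism $\N\O_X^r \simeq \N\O_Y^r$ intertwines the gauge actions of $\widehat{G}$, and then apply Theorem~\ref{mainTakai} to $Y$. The only difference is cosmetic: the paper simply asserts that the two dual actions coincide, whereas you spell out the (correct) grading argument on degree-$p$ generators that justifies this equivariance.
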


\begin{proof}
Let $Y$ be the product system of the previous lemma and notice that the dual action $\alpha$ of $\widehat{G}$ on $\N\O_X^r$ coincides with that of $\widehat{G}$ on $\N\O_Y^r$. Therefore Theorem~\ref{mainTakai} implies that, 
\[
\N\O_X^r\rtimes_{\alpha}\widehat{G}\simeq \N\O_Y^r\rtimes_{\alpha}\widehat{G}\simeq_{m} Y_e
\]
and the conclusion follows.
\end{proof}

\section{Calculating the K-theory of $\N\O_X^r\rtimes_{\alpha}\widehat{G}$} \label{s;Sch}

In \cite{Sch} Schafhauser considers the question of when the Cuntz-Pimsner algebra $O_X$ of a $\ca$-correspondence $X$ is AF -embedable. A good part of his paper is occupied with the proof of his Theorem A that calculates the K-theory of the crossed product of $\O_X$ by the natural gauge action of the circle group $\bbT$. We now observe that his calculation generalizes to product systems over abelian lattice orders under the blanket assumptions of fullness and separability for all product systems and $\ca$-algebras involved. There are two key elements in the proof: Corollary~\ref{cor;TakaiDuality} and \cite[Theorem 3.22]{AM}.

It is well-known that an $\B$-$\A$ $\ca$-correspondence $H$ induces a group homomorphism $[H] : K_*(\B) \rightarrow K_*(\A)$. Furthermore, Schafhauser shows in \cite[Proposition 4.3]{Sch}, if $H$ is regular then $[H]$ is positive at the $K_0$-level. In our situation, if  $X=\{X_p\}_{p \in P}$ is a product system of regular and full $\ca$-correspondences over an abelian lattice order $(G, P)$, then we obtain a direct limit system $(\G_{p}, i_{p,q})_{p\leq q}$ of abelian groups $\G_{p}:= K_*(X_e)$ and connecting maps 
\[
i_{p,q}\colon K_*(X_e) \xrightarrow{\phantom{p}[X_{p^{-1}q}]\phantom{p}} K_*(X_e),
\] 
for any pair $(p, q)$, $p, q \in P$ with $p \leq q$. We denote with $\varinjlim(K_*(X), [X])$ the direct limit of the system $(\G_{p}, i_{p,q})_{p\leq q}$. See \cite[Theorem 3.22]{AM} and the discussion preceding that result for more information.

\begin{theorem}
Let $X=\{X_p\}_{p \in P}$ be a product system of regular $\ca$-corre\break spondences over an abelian lattice order $(G, P)$. If $\alpha$ is the dual action of $\widehat{G}$ on $\N\O_X^r $, then 
\[
K_*(\N\O_X^r \rtimes_{\alpha} \widehat{G})\simeq \varinjlim(K_*(X), [X]).
\]
\end{theorem}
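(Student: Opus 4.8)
The plan is to combine the Takai duality established in Corollary~\ref{cor;TakaiDuality} with the K-theory computation for crossed products of Cuntz-Pimsner algebras by the gauge action, as encoded in \cite[Theorem 3.22]{AM}. By Corollary~\ref{cor;TakaiDuality}, since $X$ is a product system of regular and full $\ca$-correspondences over an abelian lattice order $(G,P)$, the crossed product $\N\O_X^r\rtimes_{\alpha}\widehat{G}$ is Morita equivalent to the core $Y_e$ of $\N\O_X^r$. Morita equivalence preserves K-theory, so it suffices to identify $K_*(Y_e)$ with $\varinjlim(K_*(X),[X])$. Thus the entire problem reduces to a K-theoretic computation of the core, and I would open the proof by invoking Corollary~\ref{cor;TakaiDuality} to make this reduction explicit.

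First I would recall the construction of the core $Y_e$ from Lemma~\ref{dilationexst}, namely $Y_e=\ca(\{X_sX_s^*\mid s \in P\})$, together with the key structural identity proved there: for any fixed $p_0 \in P$ one has $Y_e=[\cup_{p_0\leq p}X_pX_p^*]$. This identity exhibits $Y_e$ as an increasing union (in the directed sense afforded by the lattice order) of the subalgebras $X_pX_p^*\cong \K(X_p)$, which are themselves isomorphic to the compact operators on the fibers. The plan is to realize $Y_e$ as a direct limit of C*-algebras indexed by the directed set $(P,\leq)$, where the building blocks are the $X_pX_p^*$ and the connecting maps are the natural inclusions $X_pX_p^*\hookrightarrow X_qX_q^*$ for $p\leq q$. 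By continuity of K-theory under direct limits, $K_*(Y_e)\simeq \varinjlim K_*(X_pX_p^*)$.

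Next I would identify the terms and connecting maps of this direct limit with those defining $\varinjlim(K_*(X),[X])$. Since each $X_p$ is a full $\ca$-correspondence over $X_e$, the imprimitivity $X_p$ gives a Morita equivalence between $\K(X_p)=X_pX_p^*$ and $X_e$, inducing an isomorphism $K_*(X_pX_p^*)\simeq K_*(X_e)=\G_p$. The main work—and the step I expect to be the principal obstacle—is to verify that under these identifications the inclusion-induced map $K_*(X_pX_p^*)\to K_*(X_qX_q^*)$ corresponds precisely to the connecting map $i_{p,q}=[X_{p^{-1}q}]\colon K_*(X_e)\to K_*(X_e)$. This is exactly the content of \cite[Theorem 3.22]{AM}: the inclusion of compacts on $X_p$ into compacts on $X_q\cong X_p\otimes X_{p^{-1}q}$ is implemented, at the level of K-theory and after the imprimitivity identifications, by the class of the bimodule $X_{p^{-1}q}$. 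I would therefore invoke \cite[Theorem 3.22]{AM} to match the two direct systems $(\G_p,i_{p,q})_{p\leq q}$ and $(K_*(X_pX_p^*),\text{incl})_{p\leq q}$ on the nose, including the compatibility of the bonding maps, which is where the regularity and fullness hypotheses are essential (regularity guarantees $[X_{p^{-1}q}]$ is well-defined and positive on $K_0$, fullness guarantees the Morita equivalences). Combining the chain of isomorphisms $K_*(\N\O_X^r\rtimes_{\alpha}\widehat{G})\simeq K_*(Y_e)\simeq \varinjlim K_*(X_pX_p^*)\simeq \varinjlim(K_*(X),[X])$ completes the argument.
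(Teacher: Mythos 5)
Your proposal is correct and takes essentially the same route as the paper: reduce via Corollary~\ref{cor;TakaiDuality} to computing $K_*(Y_e)$, exhibit the core as the direct limit of the algebras $X_pX_p^*\cong\K(X_p)$ over the directed set $(P,\leq)$ with inclusion/``$T\mapsto T\otimes I$'' connecting maps, and invoke \cite[Theorem 3.22]{AM} to identify the resulting limit with $\varinjlim(K_*(X),[X])$. The only cosmetic difference is that the paper packages the direct limit as Albandik--Meyer's algebra $\O_1$ and cites their K-theory computation wholesale, whereas you unfold the same identification by hand via continuity of K-theory and the Morita equivalences $\K(X_p)\sim X_e$.
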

\begin{proof}
By Corollary \ref{cor;TakaiDuality}, we need to calculate the K-theory of the core $Y_e$ of $\N\O_X^r$. This is done as follows.

For any pair $(p, q)$, $p, q \in P$ with $p \leq q$, let 
\[
\phi_{p,q} : \K(X_p)\longrightarrow \K(X_q)\colon T \longmapsto T\otimes I.
\]
Consider the directed system $(\K(X_p), \phi_{p,q})_{p\leq q}$. Then the direct limit $\ca$-algebra $\varinjlim (\K(X_p), \phi_{p,q})_{p\leq q}$ is the $\ca$-algebra $\O_1$ of Albandik and Meyer \cite[Lemma 3.13]{AM}. 

On the other hand, let $j: X\rightarrow \N\O_X^r$ be the universal Nica-covariant representation in the definition of $\N\O_X^r$. Since $X_e$ acts faithfully by compacts on the fibers of $X$, we have that $j_q\circ \phi_{p,q}\circ j_p^{-1}$ is simply the inclusion map of $X_pX_p^* \subseteq X_qX_q^*$, for any pair $(p, q)$, $p, q \in P$ with $p \leq q$. Therefore $j$ establishes an isomorphism between $\O_1$ and $Y_e$. 
Now the K-theory of $\O_1$ is shown by Albandik and Meyer \cite[second paragraph of the proof of Theorem 3.22]{AM} to be the exactly one appearing in the statement of the Theorem and the conclusion follows.
\end{proof}

\begin{remark}
In the case where $(G, P) = (\bbZ^k, \bbN^k)$, the previous result has been obtained in \cite{DL} without the assumption that $X$ consists of full $\ca$-correspondences. The author is grateful to the authors of \cite{DL} for sharing with him their results and also bringing to his attention the work of Albandik and Meyer \cite{AM}.
\end{remark}

It is also prudent to observe that Schafhauser's Theorem B also generalizes in our context. One needs to verify the following.

\begin{proposition}
Let $X=\{X_p\}_{p \in P}$ be a compactly aligned product system over an abelian lattice order $(G, P)$. If $X_e$ is an AF $\ca$-algebra then the core $Y_e$ of $\N\O_X^r$ is also AF.
\end{proposition}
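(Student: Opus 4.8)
The plan is to realize the core $Y_e$ as the closed union of an upward-directed family of AF subalgebras and then invoke the local (finite-subset) characterization of AF algebras. For each $p\in P$ write $B_p:=\overline{\spn}\{j_p(\xi)j_p(\eta)^*\mid \xi,\eta\in X_p\}=X_pX_p^*$, the image of $\K(X_p)$ under the extension $j_p\colon \K(X_p)\rightarrow\N\O_X^r$, $\theta_{\xi,\eta}\mapsto j_p(\xi)j_p(\eta)^*$. The core is $Y_e=\ca(\{B_p\mid p\in P\})$. Since $p\leq q$ forces the inclusion $X_pX_p^*\subseteq X_qX_q^*$ (standard for the core of a Nica--Cuntz--Pimsner algebra, and already used in the $K$-theory computation above) and $(G,P)$ is a lattice, the family $\{B_p\}_{p\in P}$ is upward directed; hence $\bigcup_{p}B_p$ is already a $*$-algebra and $Y_e=\overline{\bigcup_{p\in P}B_p}$.

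First I would show that each $B_p$ is AF. As $B_p$ is the image of $\K(X_p)$ under the $*$-homomorphism $j_p$, and quotients of AF algebras are AF, it suffices to prove that $\K(X_p)$ is AF. Under the standing separability hypotheses, $X_p$ is a countably generated right Hilbert $X_e$-module, so Kasparov's stabilization theorem identifies $\K(X_p)$ with a hereditary $\ca$-subalgebra of $X_e\otimes\K(\ell^2)$. Since $X_e$ is AF, so is $X_e\otimes\K(\ell^2)$, and hereditary subalgebras of AF algebras are AF; therefore $\K(X_p)$, and with it its quotient $B_p$, is AF.

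It remains to pass from the directed family $\{B_p\}$ to $Y_e$ itself, for which I would apply Bratteli's characterization: a separable $\ca$-algebra is AF precisely when every finite subset can, for each $\varepsilon>0$, be approximated to within $\varepsilon$ by a finite-dimensional subalgebra. Given a finite set $F\subseteq Y_e$ and $\varepsilon>0$, density of $\bigcup_p B_p$ together with the directedness noted above (take the join $p_0$ of the finitely many indices produced) yields a single $p_0\in P$ with every element of $F$ within $\varepsilon/2$ of $B_{p_0}$; since $B_{p_0}$ is AF, those approximants lie within $\varepsilon/2$ of a finite-dimensional subalgebra of $B_{p_0}\subseteq Y_e$, and the triangle inequality closes the estimate. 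As $Y_e$ is separable (by the blanket separability assumption), this shows $Y_e$ is AF.

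The main obstacle is precisely this last step: because $(P,\leq)$ need not possess a countable cofinal sequence, one cannot simply present $Y_e$ as a \emph{sequential} inductive limit of the AF algebras $\K(X_p)$, which is the usual route to AF-ness. Routing the argument through the local finite-subset criterion, rather than through an honest sequential inductive limit, is what circumvents this, and it makes essential use of both separability and the lattice structure (to produce the single dominating index $p_0$). A secondary technical point is to record $B_p=X_pX_p^*$ as a \emph{quotient} of $\K(X_p)$ in the merely compactly aligned setting, so that the AF-ness of $\K(X_p)$ transfers to $B_p$ even without knowing that $j_p$ is injective on $\K(X_p)$.
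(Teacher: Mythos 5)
There is a genuine gap, and it sits exactly where you placed the weight of the argument: the claim that $p\leq q$ forces $X_pX_p^*\subseteq X_qX_q^*$, so that $\{B_p\}_{p\in P}$ is upward directed and $Y_e=\overline{\bigcup_p B_p}$. That inclusion is \emph{not} standard for a merely compactly aligned product system; it is equivalent (writing $q=pr$) to $j_e(X_e)\subseteq X_rX_r^*$, i.e.\ to the Cuntz--Pimsner covariance identifying the copy of $X_e$ with compact operators on $X_r$, and this holds precisely when the left action $\phi_r$ is injective and takes values in $\K(X_r)$ --- that is, when the fibers are \emph{regular}. Regularity was a hypothesis of the K-theory theorem you cite (and of Lemma~\ref{dilationexst}, where the paper proves exactly this nesting from the identity $X_e\subseteq X_{p_0}X_{p_0}^*$), but it is \emph{not} a hypothesis of the present proposition, which assumes only compact alignment (plus the blanket fullness/separability of Section~\ref{s;Sch}). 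Concretely: take $(G,P)=(\bbZ,\bbN)$ and $X_1$ the correspondence of a graph with two vertices $v,w$ and infinitely many edges from $w$ to $v$. Then $\phi_1(p_v)=I\notin\K(X_1)$, the fiber algebras are $B_0=X_e\simeq\bbC^2$ and $B_1\simeq\K(\ell^2)$ with neither contained in the other, and the core element $j_e(p_v)+s_{e_1}s_{e_1}^*$ stays at distance $\geq 1/2$ from \emph{every} single $B_p$. So $\bigcup_p B_p$ is not an algebra, $Y_e\neq\overline{\bigcup_p B_p}$, and your final local-approximation step cannot produce the dominating index $p_0$: no such index exists.

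This is exactly the difficulty the paper's proof is designed to circumvent. Instead of single fibers, it works with $B_F:=\sum_{p\in F}X_pX_p^*$ for \emph{finite $\vee$-closed subsets} $F\subseteq P$; Nica covariance makes each $B_F$ a $\ca$-subalgebra in which $B_{F\setminus\{m\}}$ is an ideal for $m\in F$ minimal, and AF-ness of $B_F$ is then proved by induction on $|F|$ using that the class of AF algebras is closed under extensions (an extension of $B_{\{m\}}/(B_{\{m\}}\cap B_{F\setminus\{m\}})$ by $B_{F\setminus\{m\}}$). The family $\{B_F\}$ \emph{is} upward directed (under inclusion of the sets $F$), and $Y_e$ is its direct limit, at which point your closing argument --- the local finite-subset characterization of AF algebras for a directed, not necessarily sequential, family of AF subalgebras of a separable algebra --- applies verbatim. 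Two smaller points: your verification that each $\K(X_p)$ is AF via Kasparov stabilization and heredity is correct, and is in fact slightly more economical than the paper's route (Morita equivalence with $X_e$, which uses fullness); and your caution in treating $B_p$ only as a quotient of $\K(X_p)$ is unnecessary (since $j_e$ is faithful, $j_p$ is faithful on $\K(X_p)$) but harmless. Neither of these repairs the directedness failure, which is the step that must be replaced by the extension-and-induction argument over finite $\vee$-closed sets.
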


\begin{proof}
As in \cite[Lemma 3.6]{CLSV11}, for a finite $\vee$-closed subset $F$ of $P$, consider 
\[
B_F:=\sum_{p \in F} \, X_pX_p^*\subseteq \N\O_X^r.
\]
We claim that $B_F$ is an AF $\ca$-algebra. We show this by induction on the size of $F$. If $F = \{p\}$ is a singleton, then $$B_F = X_pX_p^* \simeq \K(X_p).$$ Since $X_p$ is full, $B_F$ is Morita equivalent to $X_e$ and thus AF.

For the inductive step we proceed as in \cite[Lemma 3.6]{CLSV11}. Let $m \in F$ be a minimal element. Then by induction $B_{\{m\}}$ and $B_{F\backslash\{m\}}$ are AF $\ca$-algebras. Furthermore 
\[
B_F=B_{\{m\}} +B_{F\backslash\{m\}}
\] 
and $B_{F\backslash\{m\}}$ is is an ideal in $B_F$. Hence $B_F$ is closed and the extension of 
$$ B_{\{m\}} \slash (B_{\{m\}} \cap B_{F\backslash\{m\}})$$ by $B_{F\backslash\{m\}}$. Both of these $\ca$ algebras are AF and so $B_F$ is also AF and the claim follows.

In order to finish the proof observe that $Y_e$ is the direct limit of $\{ B_F\}$, where $F$ ranges over all finite $\vee$-closed subsets of $P$ and the conclusion follows.
\end{proof}

\begin{corollary}
Let $X=\{X_p\}_{p \in P}$ be a compactly aligned product system over an abelian lattice order $(G, P)$. If $X_e$ is an AF $\ca$-algebra then $\N\O_X^r \rtimes_{\alpha} \widehat{G}$ is also AF.
\end{corollary}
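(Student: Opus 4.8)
The plan is to combine the previous Proposition, which shows that the core $Y_e$ of $\N\O_X^r$ is an AF $\ca$-algebra whenever $X_e$ is AF, with Corollary~\ref{cor;TakaiDuality}, which identifies $\N\O_X^r\rtimes_{\alpha}\widehat{G}$ up to Morita equivalence with the core $Y_e$. The only gap to bridge is that Corollary~\ref{cor;TakaiDuality} as stated assumes that $X$ consists of regular \emph{and full} correspondences, whereas the present Corollary only assumes compact alignment (and AF-ness of $X_e$). So I first need to reconcile these hypotheses before invoking the Morita equivalence.

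First I would observe that the preceding Proposition already produces the core $Y_e$ as an AF $\ca$-algebra under the weaker hypothesis of mere compact alignment, using the inductive argument over finite $\vee$-closed subsets $F\subseteq P$ and the direct limit $Y_e=\varinjlim\{B_F\}$. So the AF-ness of the core is in hand. The main content is then transferring this to the crossed product. If the correspondences are regular and full, I would simply quote Corollary~\ref{cor;TakaiDuality} directly: it gives $\N\O_X^r\rtimes_{\alpha}\widehat{G}\simeq_m Y_e$, and Morita equivalence preserves the class of AF $\ca$-algebras (for separable algebras, an AF algebra is characterized by $K$-theoretic and nuclearity data stable under Morita equivalence; more concretely, a $\ca$-algebra Morita equivalent to an AF algebra is itself AF since it is stably isomorphic to it and AF-ness passes to hereditary subalgebras and to $A\otimes\K$). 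Hence $\N\O_X^r\rtimes_{\alpha}\widehat{G}$ is AF.

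The step I expect to be the main obstacle is handling the general compactly aligned case where $X$ need not be full or regular, since Corollary~\ref{cor;TakaiDuality} was only proven under those extra assumptions. The cleanest route is to avoid the Morita-equivalence statement altogether and instead work directly with Theorem~\ref{thm;calc}, which gives the isomorphism $\N\O_X^r\rtimes_{\alpha}\widehat{G}\simeq\N\O_{C_0(G,X)}^r$ valid for \emph{any} product system over an abelian lattice order. I would then argue that the core of $\N\O_{C_0(G,X)}^r$ is AF by the same inductive mechanism as in the preceding Proposition, applied to the product system $C_0(G,X)$ whose coefficient algebra $C_0(G,X_e)=C_0(G)\otimes X_e$ is AF whenever $X_e$ is AF and $G$ is (say, for $G$ discrete) a discrete group; more care is needed when $G$ is a general lattice-ordered abelian group, but the fibers $C_0(G,X_p)C_0(G,X_p)^*$ reduce to the compacts on the relevant Hilbert modules, whose AF-ness is inherited from $X_e$.

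The subtlety to watch is that an AF crossed product requires the \emph{full} algebra $\N\O_X^r\rtimes_{\alpha}\widehat{G}$ to be AF, not merely its core; this is precisely why the reduction via Corollary~\ref{cor;TakaiDuality} (or via Theorem~\ref{thm;calc} together with the structure of $\N\O_{C_0(G,X)}^r$) is essential, since it expresses the crossed product as something controlled by the core. Concretely, under the full-and-regular hypotheses the proof is a one-line deduction: \textbf{by} the preceding Proposition the core $Y_e$ is AF, \textbf{and by} Corollary~\ref{cor;TakaiDuality} the crossed product $\N\O_X^r\rtimes_{\alpha}\widehat{G}$ is Morita equivalent to $Y_e$, \textbf{hence} the crossed product is AF. I would present the argument in this form, noting explicitly that Morita equivalence (equivalently, stable isomorphism in the separable setting) preserves the class of AF algebras, and flag that the general compactly aligned case follows by the Theorem~\ref{thm;calc} route if one does not wish to assume fullness.
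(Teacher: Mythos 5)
Your proposal contains a genuine gap, and it sits exactly where you suspected it would. Route A (core is AF, plus Corollary~\ref{cor;TakaiDuality}, plus Morita invariance of AF-ness among separable $\ca$-algebras) is sound, but it only proves the statement under the extra hypotheses of fullness and regularity, which the Corollary does not assume. Route B, which is supposed to cover the general compactly aligned case, does not close: after invoking Theorem~\ref{thm;calc} to identify $\N\O_X^r\rtimes_{\alpha}\widehat{G}$ with $\N\O^r_{C_0(G,X)}$, you propose to verify that the \emph{core} of $\N\O^r_{C_0(G,X)}$ is AF by the inductive mechanism of the preceding Proposition. But for a Cuntz--Pimsner algebra, AF-ness of the core does not imply AF-ness of the algebra: take the product system $X_n=(\bbC^2)^{\otimes n}$ over $(\bbZ,\bbN)$; then $\N\O_X^r\simeq \O_2$, whose core is the CAR algebra $M_{2^\infty}$ (AF), while $\O_2$ is purely infinite and certainly not AF. So the inference pattern ``core AF $\Rightarrow$ algebra AF'' that route B rests on is false in general, and nothing in your sketch isolates the special structure of $C_0(G,X)$ that would rescue it there. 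You flag this subtlety yourself, but the phrase ``expresses the crossed product as something controlled by the core'' is not an argument; making it one would essentially force you back through the Takai machinery of the paper, which is where the fullness/regularity hypotheses entered in the first place.

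The paper's own proof avoids both routes and is genuinely shorter: the preceding Proposition shows the core $Y_e$ is AF, and $Y_e$ is precisely the fixed point algebra of the gauge action $\alpha$ of the \emph{compact} abelian group $\widehat{G}$ on $\N\O_X^r$; then Schafhauser's Theorem 6.2 in \cite{Sch} --- which says that a crossed product by a strongly continuous action of a second countable compact abelian group is AF whenever the fixed point algebra is AF --- applies verbatim, with no fullness, regularity, or Morita-equivalence step needed. That citation is exactly the missing bridge from ``core AF'' to ``crossed product AF''; to repair your write-up, replace routes A and B by it (or prove such a statement about compact abelian group actions directly).
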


\begin{proof}
The proof follows from the previous proposition and \cite[Theorem 6.2]{Sch}.
\end{proof}

%%%%%%%%%%%%%%%%%%%%%%%%%%%%%%%%%%%%%%%%%%%


\begin{thebibliography}{10}
%%%%%%%%%%%%%%%%%%%%%%%%%%%%%%%%%%%%%%%%%%%

\bibitem{Ab} B. Abadie,  \textit{Takai duality for crossed products by Hilbert $\ca$-bimodules},
J. Operator Theory \textbf{64} (2010), 19--34. 

\bibitem{AM} S. Albandik and R. Meyer,
\textit{Product systems over Ore monodies} Doc. Math. \textbf{20} (2015) 1331--1402.

\bibitem{BKQR}  E. Bedos, S. Kaliszewski, J. Quigg and D. Robertson,
\textit{A new look at crossed product correspondences and associated $\ca$--algebras},
J. Math. Anal. Appl. \textbf{426} (2015), 1080--1098.

\bibitem{BLM04}
David P. Blecher and Christian Le~Merdy,
\newblock {\em Operator algebras and their modules---an operator space
  approach}, volume~30 of {\em London Mathematical Society Monographs. New  Series}.
\newblock The Clarendon Press, Oxford University Press, Oxford, 2004.
\newblock Oxford Science Publications. 

\bibitem{CLSV11} T. M. Carlsen, N. S. Larsen, A. Sims and S. T. Vittadello,
\textit{Co-universal algebras associated to product systems, and gauge-invariant uniquenss theorems},
Proc.\ London Math.\ Soc. \textbf{103} (2011), 563--600.

\bibitem{CL02}
J. Crisp and M. Laca, \emph{On the {T}oeplitz algebras of right-angled
  and finite-type {A}rtin groups}, J. Aust. Math. Soc. \textbf{72} (2002),
  no.~2, 223--245. \MR{1887134}

\bibitem{DL} V. Deaconu, Leonard
\textit{Group actions on product systems and K-theory}, work in progress.
	
\bibitem{Deaconu} V. Deaconu,
\textit{Group actions on graphs and $\ca$-correspondences},  Houston J. Math. \textbf{44} (2018), 147--168.

\bibitem{DKQ} V. Deaconu, A. Kumjian and J. Quigg,
\textit{Group actions on topological graphs}, Ergodic Theory Dynam. Systems  \textbf{32} (2012),1527--1566.
  
  \bibitem{DorK} A. Dor-On and E. Katsoulis,
  \textit{Tensor algebras of product systems and their $\ca$-envelopes}, preprint.

\bibitem{DM05}
M. Dritschel and S. McCullough,
\textit{Boundary representations for families of representations of operator
  algebras and spaces},
J. Operator Theory \textbf{53} (2005),159--167.

\bibitem{Fow99}
N. J. Fowler, \emph{Compactly-aligned discrete product systems, and
  generalization of {$\mathcal{O}_\infty$}}, Internat. J. Math. \textbf{10} (1999),
  no.~6, 721--738. \MR{1715181}

\bibitem{Fow02} N. J. Fowler, 
\textit{Discrete product systems of Hilbert bimodules}, 
Pacific J. Math.\ \textbf{204} (2002), 335--375.

\bibitem{FMR}  N. J. Fowler P. Muhly and I. Raeburn, 
\textit{Representations of Cuntz-Pimsner algebras}, Indiana Univ. Math. J. \textbf{52} (2003), 569--605.

\bibitem{FR98} N. J. Fowler and I. Raeburn, 
\textit{Discrete product systems and twisted crossed products by semigroups}, J. Funct. Anal. \textbf{155} (1998), 171--204. 

\bibitem{Hamana} M. Hamana, \textit{Injective envelopes of operator systems}, Publ. Res. Inst. Math. Sci. \textbf{15} (1979), 773--785. MR0566081

\bibitem{HN} G. Hao and C-K. Ng,
\textit{Crossed products of $\ca$-correspondences by abelian group actions},
J. Math. Anal. Appl. \textbf{345} (2008), 702--707. MR2429169

\bibitem{KQR} S. Kaliszewski, J. Quigg, and D. Robertson,
\textit{Functoriality of Cuntz-Pimsner correspondence maps}, J. Math. Anal. Appl., \textbf{405} (2013), 1--11.

\bibitem{KQR2} S. Kaliszewski, J. Quigg and D. Robertson,
\textit{Coactions on Cuntz-Pimsner algebras},  Math. Scand. \textbf{116} (2015), 222--249.

\bibitem{Kat1} E. Katsoulis,  \emph{Non-selfadjoint operator algebras: dynamics, classification and $C^*$-envelopes}, Recent advances in operator theory and operator algebras, 27--81, CRC Press, Boca Raton, FL, 2018.

\bibitem{KatsIMRN} E. Katsoulis,
\textit{$\ca$-envelopes and the Hao-Ng Isomorphism for discrete groups},
International Mathematics Research Notices, Volume 2017, Issue 18 (2017), 5751--5768.

\bibitem{KK06b} E. Katsoulis and D. W. Kribs, 
\textit{Tensor algebras of $\ca$-correspondences and their $\ca$-envelopes}, 
J. Funct.\ Anal.\ \textbf{234} (2006), 226--233.

\bibitem{KR16} E. Katsoulis and C. Ramsey,
\textit{Crossed products of operator algebras}, Mem. Amer.\ Math.\ Soc., Volume 258, Number 1240, 2019.

\bibitem{KR} E. Katsoulis and C. Ramsey,
\textit{The non-selfadjoint approach to the Hao-Ng isomorphism}, manuscript.

\bibitem{KRfuture} E. Katsoulis and C. Ramsey,
\textit{work in progress.}

  \bibitem{Kats02} T. Katsura, 
  \textit{A construction of $\ca$-algebras from $\ca$-correspondences}, Advances in quantum dynamics (South Hadley, MA, 2002), 173--182, Contemp. Math. \textbf{335}, Amer. Math. Soc., Providence, RI, 2003.
	

\bibitem{Kat04a}
T. Katsura, \emph{A class of {$\ca$}-algebras generalizing both graph
  algebras and homeomorphism {$\ca$}-algebras {I}. {F}undamental results},
  Trans. Amer. Math. Soc. \textbf{356} (2004), no.~11, 4287--4322 (electronic).
  \MR{2067120 (2005b:46119)}
  
\bibitem{Kat04b}
T. Katsura, \emph{On $\ca$-algebras associated with {$\ca$}-correspondences}, J.
  Funct. Anal. \textbf{217} (2004), 366--401. \MR{2102572}
	
\bibitem{Kat07}
T. Katsura, \emph{Ideal structure of {$\ca$}-algebras associated with
  {$\ca$}-correspondences}, Pacific J. Math. \textbf{230} (2007), no.~1,
  107--145. \MR{2413377}
  
  \bibitem{KumP} A. Kumjian, D. Pask
\textit{$C^*$-algebras of directed graphs and group actions},
Ergodic Theory Dynam. Systems \textbf{19}, 1999, 1503--1519.

\bibitem{KPR} A. Kumjian, D. Pask, I. Raeburn,
\textit{Cuntz-Krieger algebras of directed graphs}, Pacific J.
Math. \textbf{184}, 1998, 161--174.

\bibitem{KumP2}  A. Kumjian, D. Pask, I. Raeburn, J. Renault,
\textit{Graphs, groupoids, and Cuntz-Krieger algebras}, J. Funct.
Anal. \textbf{144}, 1997, 505--541.
    
 \bibitem{LRaeb}  M. Laca and I. Raeburn, 
  \textit{Semigroup crossed products and the Toeplitz algebras of nonabelian groups}, J. Funct. Anal. \textbf{139} (1996), 415--440.
  
\bibitem{Lan95} Christopher Lance,
\textit{Hilbert $\ca$-modules. A toolkit for operator algebraists},
London Mathematical Society Lecture Note Series, \textbf{210}. Cambridge University Press, Cambridge, 1995.

\bibitem{MS98a}
P. Muhly and B. Solel,
\newblock An algebraic characterization of boundary representations.
\newblock In {\em Nonselfadjoint operator algebras, operator theory, and
  related topics}, volume 104 of {\em Oper. Theory Adv. Appl.}, pages 189--196.
  Birkh\"auser, Basel, 1998.

\bibitem{MS98b} P. Muhly and B. Solel, 
\textit{Tensor algebras over $\ca$-correspondences: representations, dilations and $\ca$-envelopes} J. 
Funct.\ Anal.\ \textbf{158} (1998), 389--457.

\bibitem{MT04}
P. Muhly and M. Tomforde, \emph{Adding tails to
  {$\ca$}-correspondences}, Doc. Math. \textbf{9} (2004), 79--106. \MR{2054981}

\bibitem{Nic92} A. Nica,
\textit{$\ca$-algebras generated by isometries and Wiener-Hopf operators},
J. Operator Theory \textbf{27:1} (1992), 17--52.

\bibitem{Paulsen} V. Paulsen,
\textit{Completely Bounded Maps and Operator Algebras}, Cambridge Studies in Advanced Mathematics \textbf{78}, Cambridge University Press, 2002. MR1976867

\bibitem{Pim97} M. V. Pimsner,
\textit{A class of C*-algebras generalizing both Cuntz-Krieger algebras and crossed products by $\bZ$},
Free probability theory (Waterloo, ON, 1995), 189--212, Fields Inst. Commun., \textbf{12}, Amer. Math. Soc., Providence, RI, 1997.

\bibitem{RS03} I. Raeburn and A. Sims,
\textit{Product systems of graphs and the Toeplitz algebras of higher-rank graphs}, Journal of Operator Theory {\bf 53} (2005), 399-429.

\bibitem{RSY03} I. Raeburn, A. Sims, and T. Yeend, 
\textit{Higher-rank graphs and their $\ca$-algebras}, 
Proc. Edinb. Math. Soc. (2) {\bf 46} (2003), 99--115.

\bibitem{RSY04} I. Raeburn, A. Sims, and T. Yeend, 
\textit{The {$\ca$}-algebras of finitely aligned higher-rank graphs}, 
J. Funct. Anal. {\bf 213} (2004), 206--240.

\bibitem{Sch} C. Schafhauser, 
\textit{Cuntz-Pimsner algebras, crossed products and K-theory}, J. Funct. Anal., {\bf 269} (2015), 2927--2946. MR3394625

\bibitem{Seh+} C. Sehnem,
\textit{On {$\ca$}-algebras associated to product systems}, Preprint, 2018, arXiv:1804.10546.

\bibitem{SY11} A. Sims and T. Yeend,
\textit{Cuntz-Nica-Pimsner algebras associated to product systems of Hilbert bimodules},
J. Operator Theory {\bf 64} (2010), 349--376.

\bibitem{Yee06}
T. Yeend, \emph{Topological higher-rank graphs and the {$\ca$}-algebras of
  topological 1-graphs}, Operator theory, operator algebras, and applications,
  Contemp. Math., vol. 414, Amer. Math. Soc., Providence, RI, 2006,
  pp.~231--244. \MR{2277214}
  
\bibitem{Yee07} T. Yeend,
\textit{Groupoid models for the $\ca$-algebras of topological higher-rank graphs}, J. Operator Theory \textbf{57}, (2007) 95--120.

\end{thebibliography}
\end{document}